\documentclass[11pt,a4paper]{article}
\usepackage{booktabs,tabularx,array}
\usepackage{algorithm,algpseudocode,placeins}
\usepackage{enumitem,booktabs,tabularx,array}
\usepackage[T1]{fontenc}
\usepackage{lmodern}
\usepackage[margin=27mm,headheight=14pt]{geometry}
\usepackage{amsmath,amssymb,amsthm,mathtools,bm}
\usepackage{microtype}
\usepackage{enumitem,booktabs,tabularx,array}
\usepackage[numbers,sort&compress]{natbib}
\usepackage{xurl}
\usepackage{xcolor}
\definecolor{linkblue}{RGB}{26,69,109}
\usepackage[colorlinks=true,linkcolor=linkblue,citecolor=linkblue,urlcolor=linkblue,
 pdfauthor={},pdftitle={Revisiting AdaGrad in Stochastic Convex Optimization: Last Iterates, High Probability, and Lower Bounds}]{hyperref}
\usepackage{fancyhdr}
\setlist{itemsep=3pt,topsep=5pt}
\numberwithin{equation}{section}
\newtheorem{theorem}{Theorem}[section]

\theoremstyle{definition}

\theoremstyle{remark}
\newtheorem{remark}[theorem]{Remark}
\theoremstyle{remark*}
\newtheorem{remark*}[theorem]{Remark}
\theoremstyle{assumption}
\newtheorem{assumption}{Assumption}

\newcommand{\polylog}{\operatorname{polylog}}

\title{\textbf{Revisiting AdaGrad in Stochastic Convex Optimization: Last Iterates, High Probability, and Lower Bounds}}
\author{Weiming Ou\footnote{Shanghai University of Finance and Economics, email: ouweiming@163.sufe.edu.cn}\qquad Xiao Wang\footnote{MoE Key Laboratory of Interdisciplinary Research of Computation and Economics, Shanghai University of Finance and Economics, email: wangxiao@sufe.edu.cn}}
\begin{document}\maketitle
\begin{abstract}
AdaGrad and AdaGrad-Norm are widely used adaptive methods, but their precise behavior in stochastic convex optimization remains less understood.  We first show that AdaGrad-Norm and AdaGrad do not admit any universal \textbf{last-iterate} rate, even under sub-Gaussian noise and bounded iterates.  We then prove that bounded variance alone is too weak: even with bounded iterates, it cannot yield \textbf{high-probability average-iterate} rates, and without bounded iterates it may not even guarantee convergence in expectation. Besides, we construct tight $\Omega(\log T/\sqrt T)$ lower bounds for both AdaGrad-Norm and AdaGrad under sub-Gaussian noise, showing that $\log T$ in existing average-iterate upper bounds is unavoidable.  Finally, we show that this $\log T$ loss disappears once bounded-iterate condition is imposed: under a general ABC condition, both methods achieve rates ${O}(1/\sqrt{T})$.
\end{abstract}
\clearpage\tableofcontents\clearpage

\section{Introduction}
AdaGrad scales its updates using the gradients observed so far, replacing a prescribed stepsize schedule by a data-dependent normalization. The original coordinate-wise method was developed in online convex optimization by \citet{mcmahan2010adaptive} and \citet{duchi2011adaptive}. AdaGrad-Norm instead uses a single accumulator for the cumulative squared gradient norms \citep{ward2020adagrad}. Although the methods share a common regret-analysis framework, their stochastic guarantees depend on the noise model, the output, and the behavior of the iterates.

For smooth convex objectives, average-iterate bounds are available for delayed AdaGrad-type schemes \citep{li2019convergence}, standard AdaGrad-Norm under suitable tail assumptions \citep{attia2023sgd,liu2023convergence}, and coordinate-wise AdaGrad on bounded domains \citep{liu2025adagrad}. Log-free, noise-adaptive rates also have precedents on bounded domains or under bounded-iterate assumptions \citep{levy2018online,vaswani2020adaptive}. These guarantees do not automatically describe the final point returned by an implementation. Our central question is therefore whether the standard updates admit an explicit, vanishing \emph{last-iterate} rate under the same smoothness, light-tail, and stability conditions.

Our main result gives a negative answer in the general conditional-oracle model. For every proposed positive sequence $r_T\to0$, we construct one smooth convex objective and one infinite-horizon, conditionally unbiased sub-Gaussian oracle such that the expected last-iterate error exceeds $r_T$ by an unbounded factor along a deterministic subsequence. All iterates remain inside an arbitrarily prescribed radius. Thus the issue is not merely a missing polynomial exponent: \emph{no prescribed vanishing rate is valid uniformly over this class}. The hard instance itself converges, separating convergence from a uniform quantitative guarantee. Its oracle is time- and history-dependent, so the result does not preclude rates under additional sampling restrictions.

A second question concerns the confidence of an \emph{averaged} output. Averaging often gives guarantees unavailable for the last iterate, but we show that it does not repair the poor confidence dependence of unmodified AdaGrad under finite variance. Our construction combines three properties: a \emph{fixed infinite-horizon instance}, the \emph{uniformly averaged iterate}, and \emph{any prescribed almost-sure trajectory radius}. This output distinction matters: the lower bound in \citet[Theorem~B.4]{chezhegov2025clipping}, specialized to zero momentum, is stated for the last iterate. A last-iterate lower bound alone does not establish an average-iterate lower bound. Our result excludes polynomial decay in time with only polylogarithmic dependence on the inverse failure probability.

The remaining results place these two limitations in context. Without a common stability radius, finite variance permits constant worst-case expected average error. Even under bounded, hence sub-Gaussian, noise, a scalar construction gives an $\Omega(\log T/\sqrt T)$ average-error lower bound for the fixed-parameter unprojected updates and a corresponding inverse-square-root SGD schedule. With bounded iterates, by contrast, classical regret analysis yields explicit noise-adaptive average-error bounds under the conditional ABC model. The distinction between a fixed instance and a horizon-dependent family is kept explicit throughout: choosing a different function for each horizon does not establish nonconvergence on one fixed instance.

\subsection{Our contributions}
Table~\ref{tab:summary} summarizes the results in the order developed below. Every lower-bound construction is one-dimensional and applies to both AdaGrad-Norm and coordinate-wise AdaGrad.
\begin{enumerate}[leftmargin=*,itemsep=4pt,topsep=3pt]
\item \textbf{No universal vanishing last-iterate rate.}
Theorem~\ref{thm:no last-iterate rate} shows that every prescribed $r_T\to0$ is violated by a single fixed sub-Gaussian instance with bounded iterates. Its expected error satisfies $\mathbb E[\Delta_{T_n}]/r_{T_n}\to\infty$ along a deterministic subsequence. Delayed bounded pulses keep the accumulator small until an arbitrarily late time; the resulting displacement defeats the proposed rate without preventing convergence of the constructed instance.
\item \textbf{A high-probability limitation for the average iterate.}
Theorem~\ref{thm:hp} excludes polynomial-in-$T$ rates with only polynomial dependence on $\log(1/\delta)$ for \emph{uniform averaging}, on one fixed infinite-horizon finite-variance instance inside any prescribed radius $D>0$. A pulse cascade increases the accumulator through alternating small displacements. The combination of fixed-instance quantifiers, averaged output, and arbitrary stability radius distinguishes the statement from the last-iterate lower bound of \citet{chezhegov2025clipping}; the general heavy-tail difficulty itself is already known.
\item \textbf{Average-error lower bounds without a uniform stability radius.}
Theorem~\ref{thm:no-bounded} gives a horizon-dependent family with constant worst-case expected average error under independent identically distributed additive finite-variance noise. Theorem~\ref{thm:log-lb} gives $\Omega(\eta\sigma\log T/\sqrt T)$ under sub-Gaussian noise, even from a minimizer. Both concern fixed algorithm parameters. The latter construction also proves a smooth-convex average-error lower bound for inverse-square-root SGD; the final part of Appendix~\ref{app:log-lb} shows that its SGD version can use independent bounded additive noise.
\item \textbf{An explicit noise-adaptive ABC upper bound.}
Under bounded iterates, Theorem~\ref{thm:norm upper bound} gives $O(T^{-1}+\sqrt{C/T})$ expected average error, with initialization and the ABC coefficients explicit. The $C=0$ case gives $O(T^{-1})$ even with state-dependent noise. This is a unified extension of classical regret and self-bounding analysis \citep{duchi2011adaptive,levy2018online,vaswani2020adaptive}, not a new noise-adaptation principle. It assumes, rather than proves, trajectory stability.
\end{enumerate}

\begin{table}[t]
\centering
\small
\setlength{\tabcolsep}{3.5pt}
\renewcommand{\arraystretch}{1.15}
\begin{tabularx}{\linewidth}{@{}>{\raggedright\arraybackslash}p{0.105\linewidth}>{\raggedright\arraybackslash}p{0.27\linewidth}>{\raggedright\arraybackslash}p{0.225\linewidth}>{\raggedright\arraybackslash}X@{}}
\toprule
Theorem & Assumptions & Guarantee type & Result \\
\midrule
\ref{thm:no last-iterate rate} & Sub-Gaussian + bounded iterates & Last iterate & No universal vanishing rate\textsuperscript{a} \\
\midrule
\addlinespace[3pt]
\ref{thm:hp} & Finite variance + bounded iterates & Average iterate,\newline high probability & No polynomial rate\textsuperscript{b} with $\polylog(1/\delta)$ \\
\midrule
\addlinespace[3pt]
\ref{thm:no-bounded} & Finite variance & Average iterate & Lower bound\textsuperscript{c} $\Omega(1)$ \\
\midrule
\addlinespace[3pt]
\ref{thm:log-lb} & Sub-Gaussian & Average iterate & Lower bound\textsuperscript{c}\newline $\Omega(\log T/\sqrt T)$ \\
\midrule
\addlinespace[3pt]
\ref{thm:norm upper bound} & ABC condition + bounded iterates & Average iterate & Upper bound\newline $O(T^{-1}+\sqrt{C/T})$ \\
\bottomrule
\end{tabularx}
\caption{Results for standard unprojected AdaGrad-Norm and AdaGrad on smooth convex objectives with conditionally unbiased oracles. Errors are in expectation unless high probability is specified.
\textsuperscript{a}\,For each prescribed $r_T\to0$, one fixed infinite-horizon instance satisfies $\mathbb E[\Delta_{T_n}]/r_{T_n}\to\infty$.
\textsuperscript{b}\,One fixed infinite-horizon instance, uniformly averaged output, and any prescribed radius $D>0$; the excluded bounds have polynomial decay in $T$ and only polynomial dependence on $\log(1/\delta)$.
\textsuperscript{c}\,Horizon-dependent families with fixed algorithm parameters and no common trajectory radius. The upper bound is an explicit ABC extension of classical analysis.}
\label{tab:summary}
\end{table}

\subsection{Related work}
\paragraph{Last-iterate convergence versus uniform rates.}
Last-iterate convergence has been studied extensively for stochastic gradient methods \citep{bertsekas2000gradient,zhang2004solving,shamir2013stochastic,orabona2020almost,orabona2020unbounded,liu2022almost}, stochastic mirror descent \citep{liu2023revisiting}, and momentum methods based on follow-the-regularized-leader updates \citep{li2022last}. For adaptive methods, \citet{li2019convergence} analyze delayed updates, while \citet{jin2022convergence,jin2026longrun} study qualitative stochastic convergence of current-gradient AdaGrad under their respective assumptions. The deterministic nonsmooth analysis of \citet{preobrazhenskaia2026last} concerns another setting. Theorem~\ref{thm:no last-iterate rate} does not assert a first qualitative convergence theorem. It instead rules out \emph{every prescribed uniform vanishing rate} under smoothness, bounded iterates, and a general conditional sub-Gaussian oracle. The distinction is substantive: almost-sure convergence of individual trajectories does not supply a uniform expected rate for the entire oracle class.

\paragraph{High-probability guarantees and the averaged output.}
Foundational and subsequent high-probability analyses of stochastic mirror descent and gradient methods include \citet{nemirovski2009robust,gorbunov2020stochastic,gorbunov2021near,li2020high}. Clipping can control heavy-tailed gradients \citep{cutkosky2021high,liu2024high}; high-probability analyses for adaptive methods include \citet{kavis2022high,attia2023sgd,hong2024revisiting,hong2024convergence}. Most directly, \citet{chezhegov2025clipping} establish poor confidence dependence for unmodified adaptive methods and show how clipping improves it. Their Theorem~B.4 concerns the event $f(x_K)-f(x^\star)\ge\varepsilon$ for the \emph{last iterate} of their momentum-AdaGrad update; setting the momentum coefficient to zero recovers AdaGrad-Norm. Our Theorem~\ref{thm:hp} directly controls $f(\bar x_T)-f(x^\star)$ instead, on one fixed infinite-horizon instance within an arbitrary prescribed radius. The proof keeps the post-cascade iterates on the same side of the minimizer, so averaging cannot cancel the long-lived displacement. This is an averaged-output obstruction, not an inference from a last-iterate event. It remains compatible with weaker confidence bounds obtained from expectation by Markov's inequality.

\paragraph{Lower bounds for SGD and adaptive methods.}
Classical oracle-complexity and information-theoretic results establish an $\Omega(T^{-1/2})$ stochastic term for suitable convex problem classes \citep{nemirovskij1983problem,agarwal2009information,raginsky2011information}. The logarithmic lower bound of \citet{harvey2019tight} concerns a \emph{nonsmooth last iterate}; it does not give a smooth-convex lower bound for uniform averaging. Lower bounds for nonconvex stationarity \citep{arjevani2023lower,jiang2025provable} and structured nonconvex objectives \citep{saad2025new} concern different settings. Recent smooth-convex SGD analyses include last-iterate upper bounds \citep{liu2023revisiting,garrigos2025last}. We are not aware of an earlier $\Omega(\log T/\sqrt T)$ lower bound for the exact combination considered here: \emph{uniformly averaged unprojected SGD, a smooth convex objective, bounded noise, and a fixed inverse-square-root stepsize schedule}. Theorem~\ref{thm:log-lb} and its SGD extension in Appendix~\ref{app:log-lb} establish this statement directly. This scoped comparison is not a lower bound for all SGD schedules: properly tuned finite-horizon steps and averaging admit different guarantees \citep{nemirovski2009robust}.

\paragraph{Adaptive methods and noise adaptation.}
The online analysis of \citet{mcmahan2010adaptive,duchi2011adaptive} provides the starting point for adaptive normalization. Related developments include adaptive steplength rules and variants of AdaGrad, RMSProp, and Adam \citep{yousefian2012stochastic,mukkamala2017variants,reddi2019convergence}, delayed AdaGrad-type schemes \citep{li2019convergence}, and standard updates under stronger tail assumptions \citep{attia2023sgd,liu2023convergence}. On a bounded convex set, \citet{levy2018online} obtain a log-free $O(LD^2/T+\sigma D/\sqrt T)$ expected bound for scalar AdaGrad. Under bounded iterates and component-function assumptions, \citet{vaswani2020adaptive} derive an analogous noise-adaptive transition using regret, self-bounding, and a quadratic inequality. Their residual-noise parameter differs from the conditional variance bound used here. Coordinate-wise adaptation under anisotropic smoothness and noise is studied by \citet{liu2025adagrad}. Our Theorem~\ref{thm:norm upper bound} keeps constants explicit under the global conditional ABC model. A broader nonconvex literature studies scalar and coordinate-wise normalization \citep{defossez2020simple,kavis2022high,faw2022power,faw2023beyond,liu2023high,yang2023two,wang2023convergence,hong2024revisiting}.
\section{Preliminaries and setup}
\paragraph{Problem and notation.}
We consider
\begin{equation}
 \min_{x\in\mathbb R^d} f(x), \tag{Opt}
\end{equation}
where $f$ is differentiable, convex, and attains its minimum. Fix $x^\star\in\arg\min f$ and write
\[
 f^\star=f(x^\star),\qquad \Delta_t=f(x_t)-f^\star,
 \qquad \bar x_T=\frac1T\sum_{t=1}^T x_t.
\]
When a hard objective depends on the horizon, we write $f_T$ and $f_T^\star$ explicitly. The corresponding trajectory is always generated on that chosen instance. We use Euclidean norms, $u_+=\max\{u,0\}$, and the usual floor and ceiling functions. For nonnegative sequences, $a_T=O(b_T)$ and $a_T=\Omega(b_T)$ denote eventual upper and lower bounds up to positive constants; $a_T\asymp b_T$ means both. Unless stated otherwise, constants may depend on fixed problem and algorithm parameters, but not on $T$ or the failure probability.

\paragraph{Oracle and filtration.}
At $x_t$, the oracle returns $g_t=\nabla f(x_t)+\xi_t$. The initialization $x_1$ is deterministic, $\mathcal F_0$ is trivial, and $\mathcal F_t=\sigma(g_1,\ldots,g_t)$, so $x_t$ is $\mathcal F_{t-1}$-measurable. Every conditional assumption below refers to the history before the current query and holds for all $t\ge1$. The oracle may depend on time and history; independent identically distributed noise is imposed only where stated. We distinguish one fixed infinite-horizon instance from a family whose objective or oracle changes with $T$.

To conduct the theoretical analysis, we introduce some assumptions below.
\begin{assumption}\label{ass:smoothness}
For some $L>0$, for all $x,y\in\mathbb R^d$, $$\|\nabla f(x)-\nabla f(y)\|\le L\|x-y\|.$$
\end{assumption}

\begin{assumption}\label{ass:unbiased}
The oracle satisfies $\mathbb E[g_t\mid\mathcal F_{t-1}]=\nabla f(x_t)$.
\end{assumption}
These two assumptions are used throughout. The results differ in which of the following moment conditions is imposed.
\begin{assumption}[Sub-Gaussian-square noise]\label{ass:sub-Gaussian noise}
For some $\sigma>0$,
\[
 \mathbb E\!\left[\exp\!\left(\frac{\|\xi_t\|^2}{\sigma^2}\right)\middle|\mathcal F_{t-1}\right]\le e.
\]
\end{assumption}
\begin{assumption}[Bounded variance]\label{ass:bounded variance noise}
For some $\sigma\ge0$, $\mathbb E[\|\xi_t\|^2\mid\mathcal F_{t-1}]\le\sigma^2$.
\end{assumption}
\begin{assumption}[ABC condition]\label{ass:ABC condition}
There are $A,B,C\ge0$ such that
\begin{equation}\label{eq:ABC}
 \mathbb E[\|g_t\|^2\mid\mathcal F_{t-1}]
 \le A\Delta_t+B\|\nabla f(x_t)\|^2+C.
\end{equation}
\end{assumption}
\begin{remark}\label{remark}
Conditional Jensen's inequality shows that Assumption~\ref{ass:sub-Gaussian noise} implies Assumption~\ref{ass:bounded variance noise}. Together with unbiasedness, bounded variance implies the ABC condition with $(A,B,C)=(0,1,\sigma^2)$. The ABC framework therefore includes both preceding noise models \citep{khaled2020better}. The case $C=0$ permits state-dependent noise; it does not require every gradient to be deterministic.
\end{remark}

\paragraph{Algorithms.}
For $b_0,\eta>0$, AdaGrad-Norm uses
\begin{equation}\label{eq:adagrad-norm}
 b_t^2=b_0^2+\sum_{s=1}^t\|g_s\|^2,
 \qquad x_{t+1}=x_t-\eta\frac{g_t}{b_t}. \tag{AdaGrad-Norm}
\end{equation}
For $b_{0,i},\eta>0$, coordinate-wise AdaGrad uses
\begin{equation}\label{eq:adagrad-coordinate}
 b_{t,i}^2=b_{0,i}^2+\sum_{s=1}^t g_{s,i}^2,
 \qquad x_{t+1,i}=x_{t,i}-\eta\frac{g_{t,i}}{b_{t,i}},
 \quad 1\le i\le d. \tag{AdaGrad}
\end{equation}
Both updates are unprojected, and each denominator includes the current gradient; it is generally not $\mathcal F_{t-1}$-measurable. In dimension one, the algorithms coincide when their initial accumulators agree. All lower bounds exploit this common scalar update, whereas the upper bound treats the two normalizations separately and allows arbitrary positive $b_{0,i}$.

\section{Main results}
We first rule out a universal last-iterate rate, then show that averaging does not remove the high-probability obstruction under finite variance. The next two results address expected average error without a common stability radius. The final theorem gives the complementary noise-adaptive upper bound when stability is available.

\subsection{No universal vanishing last-iterate rate}
Our first result concerns the expected error at the actual final iterate. Neither smoothness, sub-Gaussian noise, nor an arbitrarily small trajectory radius supplies a universal vanishing rate in the conditional-oracle model.

\begin{theorem}[Arbitrarily slow last-iterate convergence]\label{thm:no last-iterate rate}
Fix $L,\sigma,b_0,\eta,D>0$. For every prescribed sequence $r_T>0$ with $r_T\to0$, there exist a single convex $L$-smooth quadratic $f:\mathbb R\to\mathbb R$, a single infinite-horizon oracle satisfying Assumptions~\ref{ass:unbiased} and~\ref{ass:sub-Gaussian noise}, and deterministic $T_n\to\infty$, such that both algorithms, initialized at $x_1=x^\star=0$, satisfy $|x_t-x^\star|\le D$ almost surely for all $t\ge1$ and
\begin{equation}\label{eq:arbitrary-slow}
 \frac{\mathbb E[f(x_{T_n})-f^\star]}{r_{T_n}}\longrightarrow\infty.
\end{equation}
\end{theorem}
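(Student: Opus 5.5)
Take $f(x)=\frac L2x^2$, so $x^\star=0$, $f^\star=0$, and $\Delta_t=\frac L2x_t^2$; the two algorithms coincide in dimension one with $b_0$. The oracle is $g_t=Lx_t+\xi_t$, where the noise is identically zero except during sparse, deterministically scheduled ``pulse'' windows. In a pulse at time $s_n$, $\xi_{s_n}=\pm a_n$ with equal probability, independent of the past, and $a_n\le\sigma$. Then $\mathbb E[\exp(\xi^2/\sigma^2)\mid\mathcal F_{t-1}]\le e$ and the conditional mean is zero, so Assumptions~\ref{ass:unbiased} and~\ref{ass:sub-Gaussian noise} hold; bounded noise suffices. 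Outside pulses the dynamics is deterministic: $x_{t+1}=x_t(1-\eta L/b_t)$. Because $b_t$ is nondecreasing and the contraction factor is at most $1$ in absolute value once $b_t\ge\eta L/2$, the iterate contracts monotonically toward $0$. Before that point we keep $|x_t|$ tiny, so no overshoot occurs.

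\textbf{Step 1 (size of one pulse).} At a pulse time $s$ with $x_s\approx0$, the update is $x_{s+1}=x_s-\eta(Lx_s\pm a)/b_s$, where $b_s^2=b_{s-1}^2+g_s^2$. Hence $|x_{s+1}|\approx\eta a/\sqrt{b_{s-1}^2+a^2}\ge\eta a/(b_{s-1}+a)$. Choose all amplitudes $a_n$ so small that $\sum_n a_n^2\le\epsilon$, with $\epsilon$ fixed. The accumulator then satisfies $b_t^2\le b_0^2+\epsilon+\sum_s L^2x_s^2$. Since the deterministic phases are summable contractions, the last sum will be shown to remain bounded, so $b_t\le\bar b$ uniformly in time. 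Consequently each pulse displaces the iterate by $|x_{s_n+1}|\ge c\,a_n$ with $c=\eta/(\bar b+\sigma)$. The trajectory bound $|x_t|\le D$ follows from $\eta a_n/b_0\le D$, which we enforce by taking $a_n\le Db_0/\eta$. Contraction between pulses prevents accumulation.

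\textbf{Step 2 (the decay after a pulse is at most exponential, and we wait only a short time).} After pulse $n$, the deterministic contraction gives $|x_{t}|\ge|x_{s_n+1}|(1-\eta L/b_0)^{t-s_n-1}$ when $\eta L<b_0$. The case $\eta L\ge b_0$ is handled by noting that $b_t$ grows only boundedly, so after a bounded burn-in the factor is a fixed $\rho\in(0,1)$; a bounded number of early steps only changes constants. Set $T_n=s_n+1$, the step right after the pulse. Then $\mathbb E[\Delta_{T_n}]\ge\frac L2c^2a_n^2$.

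\textbf{Step 3 (beating $r_T$).} Choose $a_n=\min\{\sigma,\,Db_0/\eta,\,2^{-n}\sqrt\epsilon\}$, a fixed summable sequence, and pick $s_n$ increasing so fast that $r_{s_n+1}\le a_n^2/n$ and also $s_n-s_{n-1}$ exceeds the time needed for the previous pulse to decay. This is possible because $r_T\to0$. Then $\mathbb E[\Delta_{T_n}]/r_{T_n}\ge\frac L2c^2 n\to\infty$, which is \eqref{eq:arbitrary-slow}. Convexity, $L$-smoothness, and quadratic form are immediate. The oracle is a single infinite-horizon object because the schedule $(s_n,a_n)$ is fixed in advance.

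\textbf{Main obstacle.} The delicate point is Step~1, namely making the bound on $b_t$ uniform in time. I would prove $\sum_t x_t^2<\infty$ deterministically. The key observation is that each pulse contributes at most $\eta^2a_n^2/b_0^2\cdot(1-\rho^2)^{-1}$ to this sum through its geometric tail, and these contributions are summable in $n$. The burn-in regime with $\eta L\ge b_0$, where the first contraction factor may be negative and large, is handled by placing the first pulse only after $b_t$ has exceeded $\eta L$. For this it suffices that the deterministic phase from $x_1=0$ stays at $0$, so one pre-pulse of amplitude $\sqrt{(\eta L)^2}$ sign-symmetrized is inserted first. Alternatively, choose the quadratic $f(x)=\frac{\mu}{2}x^2$ with $\mu\le\min\{L,b_0/(2\eta)\}$, which is still $L$-smooth; this removes the issue entirely, and I would adopt it.
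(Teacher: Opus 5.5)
Your argument is correct once two steps are tightened, but it takes a different route from the paper.

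\textbf{Comparison with the paper.} The paper uses the same quadratic, with curvature $\min\{L,b_0/(2\eta)\}$. Its pulses, however, have a \emph{fixed} magnitude $M$, tuned so that one pulse from $x=0$ with accumulator $b_0$ moves the iterate by exactly $d_0$. The pulse at $\tau_n$ fires only with probability $q/(n+1)^2$, and the oracle switches all noise off after the first nonzero pulse. Hence the accumulator is exactly $b_0$ until that pulse, and the displacement and the radius bound are computed exactly. The bound $\mathbb E[\Delta_{\tau_n+1}]\ge c/(n+1)^2$ comes purely from the probability of that event, so no accumulator bound, gap condition, or residual control is needed.

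In your version every pulse fires, with amplitude $a_n\downarrow0$, so the smallness sits in the magnitude rather than the probability. Your noise is also independent of the history; only its law depends on time. This costs you a uniform bound on $b_t$ and a gap condition on the schedule. In return you get two things the paper lacks. First, history dependence turns out to be unnecessary, whereas the paper explicitly flags its oracle as history-dependent. Second, your lower bound holds on every path, so $\Delta_{T_n}/r_{T_n}\to\infty$ almost surely, not only in expectation.

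\textbf{Uniform accumulator bound.} Your geometric-tail argument for $\sum_t x_t^2<\infty$ is circular as written, because $\rho=1-\eta\mu/\bar b$ already presupposes $\bar b$. Instead, start from $|x_{t+1}|\le|x_t|(1-\eta\mu/b_t)+\eta|\xi_t|/b_0$ and $x_1=0$. This gives $|x_t|\le(\eta/b_0)\sum_n a_n$ and $\sum_t|x_t|/b_t\le\sum_n a_n/(\mu b_0)$.

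The first estimate yields the radius bound once $\sqrt\epsilon\le Db_0/\eta$. Your condition $a_n\le Db_0/\eta$ alone does not handle a nonzero residual $x_{s_n}$. The second estimate gives $b_T^2\le b_0^2+2\sum_n a_n^2+(2\mu D\sum_n a_n/b_0)\,b_T$. This bounds $b_T$ by a constant $\bar b$ that depends only on $(a_n)$ and not on the schedule, so choosing the gaps afterwards is legitimate.

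\textbf{Residual at the evaluation time.} At $T_n=s_n+1$ you only have $|x_{s_n+1}|\ge\eta a_n/\bar b-|x_{s_n}|$. The gaps must therefore force $|x_{s_n}|\le\eta a_n/(2\bar b)$ on every path. This is possible because the off-pulse contraction factor is at most $1-\eta\mu/\bar b$. With that, $\Delta_{T_n}\ge\mu\eta^2a_n^2/(8\bar b^2)$ pathwise. Your Step~2 on post-pulse decay and the burn-in discussion then become unnecessary.
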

\begin{proof}[Proof sketch]
At the minimizer of a small quadratic, both the iterate and accumulator stay unchanged until a nonzero pulse occurs. At scheduled times $\tau_n$, allow symmetric noise of fixed magnitude with probability $q/(n+1)^2$, and switch off all noise after the first pulse. That pulse creates a fixed displacement, followed by deterministic contraction. Its occurrence probability gives $\mathbb E[\Delta_{\tau_n+1}]\ge c_1/(n+1)^2$; see \eqref{eq:spike-lower}. Choosing $\tau_n$ so that $r_{\tau_n+1}\le(n+1)^{-4}$ proves the claim. The delay, rather than growing noise magnitude, defeats the rate. Appendix~\ref{app:last} also verifies asymptotic convergence of this particular instance.
\end{proof}

\begin{remark}\label{rem:last-quantifiers}
The quantifiers select an instance after the proposed sequence $r_T$, but then hold it fixed for all evaluation times. Thus the theorem is stronger than excluding every polynomial exponent separately, and is not a horizon-by-horizon change of oracle. The quantifier is $\forall(r_T\to0)\,\exists\mathcal I\,\exists(T_n)$, not the existence of one instance slower than every possible sequence. The constructed instance satisfies $\mathbb E[\Delta_t]\to0$; no general qualitative convergence theorem is asserted. An explicit schedule, independent of a proposed rate, gives
\begin{equation}\label{eq:double-log-last}
 \mathbb E[\Delta_{T_n}]\ge\frac{c}{(1+\log\log T_n)^2},
 \qquad T_n=\lceil\exp(\exp n)\rceil+1,
\end{equation}
where $c>0$ depends only on $L,\sigma,b_0,\eta,D$. The schedule for an arbitrary $r_T$ need not be this particular one. The obstruction uses delayed, time- and history-dependent noise, not unbounded pulse magnitudes; it does not rule out rates under additional oracle restrictions.
\end{remark}

\begin{remark}\label{rem:last-fixed-data}
The construction uses a fixed objective and bounded pulses. The objective is a strongly convex quadratic with curvature $\ell=\min\{L,b_0/(2\eta)\}$, independent of the proposed rate. Its minimizer, the initialization, the radius, and the pulse magnitude $M$ are also fixed independently of $r_T$. In fact, the construction satisfies the stronger pathwise bounds $|\xi_t|\le M$ and $|g_t|\le \ell d_0+M$, with $d_0\le D$. Only the scheduled pulse times depend on $r_T$. Thus the obstruction is neither a flattening sequence of objectives nor increasingly large gradients. It arises because elapsed time need not force growth of the adaptive accumulator: the oracle can remain noiseless at a minimizer for an arbitrarily long interval. The uniform moment assumptions control pulse size and probability but do not impose a clock on when stochastic variation must occur.
\end{remark}

\begin{remark}\label{rem:last-versus-average}
Averaging can still have a rate because it dilutes a late excursion: before the pulse, the objective error is zero and the accumulator stays at $b_0$; immediately afterward, the last iterate has a fixed error. A late evaluation can therefore detect the entire displacement, whereas uniform averaging dilutes a short late excursion by the horizon. This explains why Theorem~\ref{thm:norm upper bound} can give an explicit expected average-iterate rate on the same class. An average-iterate upper bound cannot be converted into a last-iterate rate merely by substituting the final point.
\end{remark}

\subsection{A high-probability limitation even for the average iterate}
We next weaken the noise assumption to finite variance and ask for a high-probability guarantee for the \emph{uniform average}, rather than the last iterate. The following result combines one fixed infinite-horizon instance, this averaged output, and any prescribed bounded-trajectory radius. 

\begin{theorem}[No polynomial average-iterate rate with polylogarithmic confidence]\label{thm:hp}
Fix $L,\sigma,b_0,\eta,D>0$. There exist a fixed one-dimensional convex $L$-smooth quadratic and a fixed infinite-horizon oracle satisfying Assumptions~\ref{ass:unbiased} and~\ref{ass:bounded variance noise}, such that both algorithms, initialized at $x_1=x^\star=0$, satisfy $|x_t-x^\star|\le D$ almost surely for every $t\ge1$. There also exist deterministic $T_n\to\infty$, $\delta_n\in(0,1/2)$, and constants $c_\star,K>0$ such that
\begin{equation}\label{eq:hp-lower}
 \begin{aligned}
 \mathbb P\{f(\bar x_{T_n})-f^\star\ge c_\star\}&\ge2\delta_n,\\
 \log(1/\delta_n)&\le K(1+\log T_n)^2.
 \end{aligned}
\end{equation}
Consequently, no bound
\begin{equation}\label{eq:hp-excluded}
 f(\bar x_T)-f^\star\le K_0\bigl[T^{-a}+P(\log(1/\delta))T^{-b}\bigr],
 \qquad a,b>0,
\end{equation}
can hold with probability at least $1-\delta$ for every $T\ge1$ and $\delta\in(0,1/2)$, with $K_0<\infty$ and a fixed polynomial $P$, nonnegative on $[\log 2,\infty)$, independent of $T,\delta$.
\end{theorem}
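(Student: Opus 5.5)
Throughout, I work on the scalar quadratic $f(x)=\tfrac{\ell}{2}x^2$ with $\ell=\min\{L,b_0/(2\eta)\}$, as in Remark~\ref{rem:last-fixed-data}, started at $x_1=x^\star=0$. In dimension one the two algorithms coincide, so a single argument covers both. Each round, the oracle outputs $g_t=\ell x_t+\xi_t$, and the noise $\xi_t$ stays zero except at pulse times fixed in advance. The goal is to force the averaged iterate to have constant error with a probability that decays only quasi-polynomially in $T$; under finite variance, a large-amplitude pulse that fires rarely can achieve this.

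\textbf{Construction.} I pick scheduled times $\tau_n$ that grow geometrically, say $\tau_n=2^n$. At time $\tau_n$, provided no pulse has fired yet, the oracle uses $\xi_{\tau_n}=\pm M_n$ with probability $p_n/2$ each and $\xi_{\tau_n}=0$ otherwise. Bounded variance requires $p_nM_n^2\le\sigma^2$. After the first nonzero pulse, all noise is switched off. A single huge pulse alone is not enough: since $x_{t+1}=x_t-\eta g_t/b_t$ and $b_t\ge|g_t|$, one step moves the iterate by at most $\eta$, and then $b_t\approx M_n$ makes subsequent steps tiny. This is the cascade mentioned in the contributions. Following the pulse of size $M_n$, the oracle runs a deterministic schedule of alternating pulses designed to put the iterate at a fixed displacement $d_0\le D$, for instance $d_0=\min\{D,\eta/2\}$. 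It then keeps the iterate on one side of the minimizer for the rest of the run. Once noise stops, the iterate contracts at the rate $x_{t+1}=x_t(1-\eta\ell/b_t)$ with $b_t$ of order $M_n$. For about $M_n/(\eta\ell)$ steps the iterate therefore stays of order $d_0$ and keeps its sign. To keep the oracle unbiased while still steering the iterate, I pair each pulse with a symmetric $\pm$ noise and let the sign of the first pulse determine the steering direction. By symmetry I treat the positive side.

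\textbf{Lower bound on the average.} I take $T_n=2\tau_n$ and choose $M_n\asymp T_n$, so that the excursion lasts through time $T_n$. On the event that the first pulse fires at $\tau_n$, the iterates $x_t$ for $t\in[\tau_n,T_n]$ are at least $d_0/2$ in magnitude with a fixed sign, while all earlier iterates are $0$. This gives $|\bar x_{T_n}|\ge d_0/4$ and hence $f(\bar x_{T_n})\ge c_\star:=\ell d_0^2/32$. The event has probability at least $p_n\prod_{m<n}(1-p_m)\ge p_n/2$ whenever $\sum p_m\le1/2$. Setting $p_n=\sigma^2/M_n^2\asymp T_n^{-2}$ and scaling the constants so the sum is at most $1/2$, I define $\delta_n=p_n/4$. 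Then $\log(1/\delta_n)\le K(1+\log T_n)$, which is stronger than the claimed bound $K(1+\log T_n)^2$. The looser square exponent leaves room if the cascade requires $M_n$ to be polynomially larger, for example $M_n=e^{c\log^2 T_n}$.

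\textbf{Consequence.} Suppose a bound of the form \eqref{eq:hp-excluded} held. I set $T=T_n$ and $\delta=\delta_n$. Its right-hand side is at most $K_0[T_n^{-a}+P(K(1+\log T_n)^2)T_n^{-b}]$, which tends to $0$ and so is eventually below $c_\star$. The bound would then imply $\mathbb P\{f(\bar x_{T_n})-f^\star\ge c_\star\}\le\delta_n<2\delta_n$, contradicting \eqref{eq:hp-lower}.

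\textbf{Main obstacle.} The difficult step is designing a cascade that is unbiased, has bounded conditional variance, and keeps the trajectory inside radius $D$ almost surely. It must also guarantee a displacement of order $d_0$ that lasts about $T_n$ steps despite the step-size cap $\eta$ and the accumulator inflating after the pulse. My first attempt would be a sequence of moderate alternating-sign pulses $\pm m_k$ with $m_k$ growing geometrically up to $M_n$. Each pulse contributes a displacement of size about $\eta m_k/b$, and I would use a history-dependent choice of sign to push the iterate toward $d_0$ while stopping before it reaches $D$. The almost-sure radius bound would come from the one-step bound $|x_{t+1}-x_t|\le\eta$, together with stopping the cascade once $|x_t|\ge d_0$.
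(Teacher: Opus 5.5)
\textbf{Verdict: there is a genuine gap.} Your closing contradiction is correct. The single-pulse version of your scheme (one symmetric pulse $\pm M_n$ at $\tau_n$ with $p_nM_n^2\le\sigma^2$) does prove the theorem when $D\ge\eta$, even with $\log(1/\delta_n)=O(\log T_n)$. What blocks a single pulse is the radius, not the tiny later steps; those tiny steps are what make the displacement persist.

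The gap is the small-radius regime. The theorem fixes an arbitrary $D>0$, and there you never actually give a construction.
\begin{itemize}
\item When $D<\eta$, your first pulse of size $M_n\gg b_0$ fires at $x=0$, $b=b_0$. It moves the iterate by $\eta M_n/\sqrt{b_0^2+M_n^2}\approx\eta>D$ before any cascade starts.
\item The later pulses cannot form a ``deterministic schedule''. Under Assumption~\ref{ass:unbiased} every nonzero noise value is a conditionally mean-zero random outcome. Assumption~\ref{ass:bounded variance noise} with Markov's inequality gives $\mathbb P(|\xi_t|\ge m\mid\mathcal F_{t-1})\le\sigma^2/m^2$ at \emph{every} step, so each steering pulse is itself a rare event.
\item Your radius argument fails. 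The bound $|x_{t+1}-x_t|\le\eta$ together with stopping once $|x_t|\ge d_0$ does not give $|x_t|\le D$ when $D<\eta$, and you never check the off-path outcomes of each pulse.
\end{itemize}

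The decisive casualty is the probability estimate. Suppose every cascade step moves the iterate by at most $d_0<\eta$. Then $m_k\le rb_{k-1}$ with $r=d_0/\sqrt{\eta^2-d_0^2}$, so the accumulator grows by at most the factor $R=\sqrt{1+r^2}$ per step. Reaching $b\gtrsim\eta\ell T_n$, which the excursion needs in order to survive to $T_n$, therefore takes $k\gtrsim\log T_n/\log R$ pulses. For your geometric cascade, the designated path then has probability at most of order $\prod_{j\le k}\sigma^2/m_j^2=\exp(-\Theta((\log T_n)^2))$, not $p_n/4\asymp T_n^{-2}$. So your choice $\delta_n=p_n/4$ does not satisfy $\mathbb P\{f(\bar x_{T_n})-f^\star\ge c_\star\}\ge2\delta_n$ for the cascade event, and the claimed $\log(1/\delta_n)\le K(1+\log T_n)$ is unsupported. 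The square in \eqref{eq:hp-lower} comes from this product, not from enlarging $M_n$.

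The paper supplies the missing piece as follows.
\begin{itemize}
\item It takes $d_0=\min\{D/4,\eta/4\}$, $M_j=rb_0R^{j-1}$, and the three-point law \eqref{eq:hp-oracle} with $p_j=\min\{1/4,\sigma^2/(4M_j^2)\}$.
\item The designated signs $(-1)^{j+1}M_j$ give $b_j=b_0R^j$, and the iterate alternates exactly between $0$ and $-d_0$. The net displacement stays bounded and one-signed while the accumulator grows geometrically.
\item An undesignated sign only reaches $d_0$ or $-2d_0$, and every deactivated update contracts, so the radius holds on all paths.
\item An internal odd length $N$ fixes one oracle for all $n$, with the cascade starting at time $1$; no late schedule $\tau_n$ is needed.
\item Finally, $\mathbb P(E_n)\ge2^{-(n+1)/2}\prod_{j\le n}(p_j/4)$ together with $T_n=\lfloor\sqrt{b_0R^n}\rfloor$ gives $\log(1/\delta_n)=O(n^2)=O((\log T_n)^2)$.
\end{itemize}
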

\begin{proof}[Proof sketch]
A single large gradient can move the iterate almost $\eta$, so it cannot respect an arbitrary radius $D<\eta$. Instead, arrange a finite random cascade whose designated path alternates between $0$ and $-d_0$, while the accumulator grows geometrically; see \eqref{eq:hp-designated}. Rare pulses preserve a common conditional variance bound, and every departure switches off the noise. Stopping after an odd number $n$ of pulses leaves a nonzero displacement and an exponentially large accumulator, so the displacement survives for an exponentially long horizon. The designated event has probability at least $\exp(-O(n^2))$, yielding \eqref{eq:hp-lower}. Appendix~\ref{app:hp} checks all paths and the fixed-oracle quantifier.
\end{proof}

\begin{remark}\label{rem:hp-average}
Our result concerns the averaged output on one fixed bounded instance. The last-iterate event in \citet[Theorem~B.4]{chezhegov2025clipping} does not by itself lower-bound $f(\bar x_T)-f^\star$. Convexity gives an \emph{upper} bound on $f(\bar x_T)$ by the mean of the iterate values, not the lower bound needed here. Our cascade instead creates a displacement that persists for a substantial fraction of the horizon. On the designated event all iterates are nonpositive and the late ones have magnitude at least $d_0/2$, so their uniform average has magnitude at least $d_0/4$; see \eqref{eq:hp-average}. The argument directly excludes the stated confidence dependence even for this more forgiving output.

The fixed-instance quantifier is particularly strong here. Once the cascade law has been chosen from $L,\sigma,b_0,\eta,D$, the \emph{same} instance excludes every pair of positive exponents and every fixed polynomial appearing in \eqref{eq:hp-excluded}. It is not retuned for a proposed confidence bound. Small oscillations consume normalization while leaving little net displacement; after the last designated pulse, the accumulated normalization prevents rapid recovery. This separates trajectory containment from concentration of the averaged objective error. In contrast, a single large initial pulse need not respect a prescribed radius smaller than $\eta$.
\end{remark}

\begin{remark}\label{rem:hp-confidence}
The obstruction concerns confidence dependence rather than nonconvergence: the levels $\delta_n$ shrink with the horizons. The theorem does not say that the averaged error fails to vanish for a fixed $\delta$, nor that averaging fails almost surely on the hard instance. Indeed, Theorem~\ref{thm:norm upper bound} and Markov's inequality give, in dimension one, for every $\delta>0$,
\begin{equation}\label{eq:markov-average}
 \mathbb P\!\left\{f(\bar x_T)-f^\star>
 \frac1\delta\left(\frac{2L\Gamma^2+\Gamma b_0}{T}
                 +\frac{\Gamma\sigma}{\sqrt T}\right)\right\}\le\delta,
 \quad \Gamma=\frac{D^2}{2\eta}+\eta.
\end{equation}
Its $1/\delta$ dependence is outside the class excluded by \eqref{eq:hp-excluded}. Thus the substantive distinction is that \emph{uniform averaging does not upgrade finite variance to a polynomial rate with polylogarithmic confidence dependence}, even on one fixed bounded instance.
\end{remark}

\subsection{Finite variance permits constant worst-case average error}\label{sec:no-bounded}
The previous theorem retains a bounded trajectory and isolates confidence dependence. If that stability condition is removed, finite variance alone does not even supply a vanishing \emph{uniform worst-case} expected average-error bound for fixed algorithm parameters.

\begin{theorem}[Constant worst-case error under bounded variance]\label{thm:no-bounded}
Fix $\sigma,b_0,\eta>0$. For every sufficiently large integer $T$, there exist a one-dimensional convex $1$-smooth function $f_T$ with $f_T^\star=0$ and an oracle with independent identically distributed additive noise satisfying Assumptions~\ref{ass:unbiased} and~\ref{ass:bounded variance noise}, such that both algorithms, initialized at $x_1=1$ with $x^\star=0$, satisfy
\begin{equation}\label{eq:variance-worstcase}
 \mathbb E[f_T(\bar x_T)-f_T^\star]
 \ge\frac{\eta\sigma^2}{256\sqrt{b_0^2+\sigma^2/16}}.
\end{equation}
The objective and noise law may depend on $T$, but the initial distance, algorithm parameters, variance bound, and displayed lower-bound constant do not.
\end{theorem}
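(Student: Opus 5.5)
The plan is to let the noise, not the initial distance, generate the error. I would build a horizon-dependent objective that is almost flat near the start, so the deterministic drift over $T$ steps is negligible, and use i.i.d.\ symmetric two-point noise $\xi_t=\pm\sigma/4$, which has variance $\sigma^2/16\le\sigma^2$. The constant $\eta\sigma^2/(256\sqrt{b_0^2+\sigma^2/16})$ suggests this scale. A step driven by noise of size $\sigma/4$ through an accumulator of size about $\sqrt{b_0^2+\sigma^2/16}$ displaces the iterate by about $\eta(\sigma/4)/\sqrt{b_0^2+\sigma^2/16}$. The objective must then convert this displacement into a constant error with slope of order $\sigma/4$ away from a small quadratic core. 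In dimension one both algorithms coincide (same $b_0$), so it suffices to treat the scalar update.

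\emph{Step 1 (objective).} I would take $f_T$ to be a Huber-type function with $f_T^\star=0$ at $x^\star=0$, quadratic with curvature $1$ on a small core, and then linear. It would be modified on $[1-R_T,1+R_T]$, with $R_T$ of order $\eta\sqrt{\log T}$, which is the typical spread of the adaptive random walk. On that window the slope is small, of order $\epsilon_T\to0$, so the drift $\sum_t \eta\epsilon_T/b_t\lesssim \eta\epsilon_T\sqrt T/\sigma$ is $o(1)$. The value of $f_T$ there stays bounded below by a constant of order $\eta\sigma^2/\sqrt{b_0^2+\sigma^2/16}$. Convexity forces the function to be increasing on $[0,\infty)$, so "flat" means a tiny slope; I would check that $1$-smoothness and convexity survive the gluing.

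\emph{Step 2 (coupling to a martingale).} On the event that the trajectory stays in the flat window, I would write $x_t=1-\sum_{s<t}\eta(f_T'(x_s)+\xi_s)/b_s$. The drift part is $o(1)$. The noise part $M_t=-\sum_{s<t}\eta\xi_s/b_s$ has conditional increments bounded by $\eta$ and second moment of order $\eta^2\log t$. A Doob/Freedman bound then shows the trajectory leaves the window with small probability. Outside the window I would only need $f_T\ge0$.

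\emph{Step 3 (lower bound on the average).} Since $f_T\ge0$ everywhere, it suffices to show that, with constant probability, $\bar x_T$ stays at distance of order $\eta\sigma/\sqrt{b_0^2+\sigma^2/16}$ from the low region. One way is to use the first-step displacement, which is deterministic in magnitude and has a random sign, and then check that later steps do not undo it on average. An alternative is Jensen with a locally quadratic lower bound, $\mathbb E f_T(\bar x_T)\ge c\,\mathbb E[(\bar x_T-1)^2]$ on the window, together with a lower bound on $\mathrm{Var}(\bar x_T)$ from the early increments.

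The main obstacle is Step 1 combined with Step 3: reconciling convexity and $f_T^\star=0$ with an objective that is nearly flat around $x_1=1$ yet has value of constant order there. If the flat-window design fails, I would shift the burden onto the noise. I would use rare large pulses of size $M_T$ with probability $\sigma^2/M_T^2$, so that one early pulse inflates $b_t$ to order $M_T$ and effectively freezes the iterate at a constant error. I would tune $M_T$ against $T$ so that this happens with constant probability.
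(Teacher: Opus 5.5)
Your Step~1 asks for an objective that convexity forbids, and this is the central difficulty, not a gluing detail. Since $f_T$ is convex with $f_T(0)=f_T^\star=0$, the derivative $f_T'$ is nondecreasing and nonnegative on $[0,\infty)$, and $f_T(x)\le x f_T'(x)$ for every $x$.

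\begin{itemize}
\item A slope at most $\epsilon_T$ on $[1-R_T,1+R_T]$ therefore forces $f_T'\le\epsilon_T$ on all of $[0,1+R_T]$. Hence $f_T\le\epsilon_T(1+R_T)\to0$ on your window.
\item Conversely, a value of constant order near $x=1$ forces a slope of constant order there. Then the drift $\sum_{t\le T}\eta f_T'(x_t)/b_t$ grows like $\sqrt T$, swamps the $O(\eta\sqrt{\log T})$ fluctuation, and the iterates return to the core.
\end{itemize}

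The obstruction is structural. Symmetric $\pm\sigma/4$ noise is bounded, hence sub-Gaussian. In that regime the existing average-iterate upper bounds, which Theorem~\ref{thm:log-lb} matches, are of order $\log T/\sqrt T$, so no objective can give a $T$-independent lower bound with that noise. Likewise, Theorem~\ref{thm:norm upper bound} gives $O(1/T+\sigma/\sqrt T)$ whenever the iterates stay in a fixed bounded set almost surely. A constant lower bound must therefore use the skewed, heavy noise that bounded variance permits, together with displacements that grow with $T$. Your plan instead keeps the iterate within $O(\eta\sqrt{\log T})$ of its start.

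Your fallback fails on the variance budget. A frozen point of constant error has a slope of constant order, by the inequality above. Freezing it for $\Theta(T)$ steps needs $b_t\gtrsim T$, i.e.\ $M_T\gtrsim T$. But then each pulse has probability at most $\sigma^2/M_T^2=O(T^{-2})$, so even one pulse before time $T$ has probability $O(1/T)$. If instead $M_T\asymp\sigma\sqrt T$, a pulse does occur with constant probability, but typically at a time of order $T$, after the iterate has already reached the minimizer. Moreover, $b_t\asymp\sigma\sqrt T$ releases a constant-slope point within $O(\sqrt T)$ steps.

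The missing idea is to exploit the \emph{absence} of the rare correction rather than its occurrence.

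\begin{itemize}
\item The paper takes a smooth hinge with slope $a=\sigma/(4\sqrt T)$. The noise equals $-2a$ with probability $1-1/T$ and $2a(T-1)$ with probability $1/T$, which is mean zero with variance at most $\sigma^2/4$.
\item With probability $(1-1/T)^T\ge1/4$ no correction occurs before time $T$. On that event every observed gradient equals $-a$ and points away from the minimizer.
\item The accumulator then stays below $B_0=\sqrt{b_0^2+Ta^2}=\sqrt{b_0^2+\sigma^2/16}$. This is where the constant in the statement comes from: it is the total squared gradient over the horizon, not a single step of size $\sigma/4$.
\item The iterate gains at least $\eta a/B_0$ per step, so $\bar x_T-1\ge\eta a(T-1)/(2B_0)\asymp\sqrt T$. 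The tiny slope $a$ converts this growing displacement into $f_T(\bar x_T)\ge\eta\sigma^2/(64B_0)$, and multiplying by $1/4$ gives the theorem.
\end{itemize}
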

\begin{proof}[Proof sketch]
Use a smooth hinge with slope $a=\sigma/(4\sqrt T)$. In its linear region, the oracle returns $-a$ with probability $1-1/T$; a rare large positive correction preserves unbiasedness and bounded variance. With probability at least $1/4$, no correction occurs before $T$. On that event the accumulator stays bounded, the iterates move away from the minimizer, and their average displacement is of order $\eta aT$. The objective gap is therefore of order $\eta a^2T$, a positive constant in $T$. Equations~\eqref{eq:variance-trajectory}--\eqref{eq:variance-event-gap} in Appendix~\ref{app:no-bounded} make the constants explicit.
\end{proof}

\begin{remark}\label{rem:variance-scope}
This horizon-wise lower bound concerns uniform worst-case convergence, not a failure of convergence on one fixed instance. Unlike the preceding pulse constructions, it uses independent identically distributed additive noise on every fixed instance. The initial distance remains one across the family; the obstruction cannot be attributed to a growing initialization error.
\end{remark}

\subsection{A logarithmic average-error lower bound under sub-Gaussian noise}\label{sec:log-lb}
The finite-variance counterexamples allow increasingly large gradients. The next construction shows that a loss persists even with uniformly bounded noise. A single scalar, fixed-magnitude oracle proves the result for both algorithms, without an auxiliary coordinate.

\begin{theorem}[A scalar logarithmic average-error lower bound]\label{thm:log-lb}
Fix $L,\sigma,b_0,\eta>0$. For every sufficiently large integer $T$, there exist a convex $L$-smooth function $f_T:\mathbb R\to\mathbb R$ with $f_T^\star=0$ and an oracle satisfying Assumptions~\ref{ass:unbiased} and~\ref{ass:sub-Gaussian noise}, such that both AdaGrad-Norm and AdaGrad, initialized at $x_1=x^\star=0$, satisfy
\begin{equation}\label{eq:log-lower}
 \mathbb E[f_T(\bar x_T)-f_T^\star]
 \ge c\eta\sigma\frac{\log T}{\sqrt T},
\end{equation}
where $c>0$ is universal. The objective and oracle may depend on $T$, but $L,\sigma,b_0,\eta$ do not. No horizon-independent bounded-iterate radius is assumed.
\end{theorem}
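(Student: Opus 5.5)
The idea is to make the accumulator grow like $\sigma^2 t$ through symmetric bounded noise, so the effective stepsize is $\eta/(\sigma\sqrt t)$. A drift then moves the iterate by about $\sum_t \eta a/(\sigma\sqrt t)$, and averaging the quadratic gap along that path produces a harmonic-type sum, which is where the $\log T$ comes from.

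\textbf{Construction.} I would use a scalar, horizon-dependent smooth hinge, as in Theorem~\ref{thm:no-bounded}, but now with bounded noise:
\[
f_T(x)=\tfrac{\ell}{2}x^2 \text{ for } |x|\le \rho_T, \quad \text{extended linearly with slope } a_T=\ell\rho_T \text{ beyond } \rho_T,
\]
with $\ell\le L$. The minimizer stays at $0=x_1$.

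\textbf{Oracle.} The oracle returns $g_t=\nabla f_T(x_t)+\xi_t$ with $\xi_t=\pm s$ a Rademacher sign, and $s=\sigma/2$ so that $e^{s^2/\sigma^2}\le e$. Then $b_t^2\asymp b_0^2+\sigma^2 t$, up to the bounded gradient contributions. The iterate is a martingale-plus-drift recursion:
\[
x_{t+1}=x_t-\eta\,\frac{f_T'(x_t)+\xi_t}{b_t}.
\]

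\textbf{Key step: expected deviation.} The mean-square displacement satisfies $\mathbb E x_{t+1}^2\approx(1-2\eta\ell/b_t)\,\mathbb E x_t^2+\eta^2 s^2/b_t^2$. The injected variance $\eta^2 s^2/b_t^2\asymp \eta^2/t$ sums to $\eta^2\log T$, while contraction with rate $\eta\ell/(\sigma\sqrt t)$ only pulls back partially. Choosing $\ell\asymp\sigma/(\eta\sqrt T)$, the curvature is negligible over the horizon and $\mathbb E x_t^2\gtrsim \eta^2\log t$.

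\textbf{Obstacle: the average, not the individual iterates.} The difficulty is that $\bar x_T$ averages random-walk increments: $\bar x_T=\sum_s w_s\,\eta\xi_s/b_s$ with weights $w_s=(T-s)/T$. Its variance is about $\eta^2\sum_s (1-s/T)^2/(\sigma^2 s)\cdot s^2/\sigma^0$, which is again $\asymp\eta^2\log T$, since early increments carry weight near $1$. So $\mathbb E\bar x_T^2\gtrsim\eta^2\log T$, and the quadratic gives
\[
\mathbb E f_T(\bar x_T)\gtrsim \ell\,\eta^2\log T\asymp \eta\sigma\frac{\log T}{\sqrt T},
\]
as long as $|\bar x_T|\le\rho_T$, or using the hinge's linear growth otherwise.

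Three technical points have to be handled:
\begin{enumerate}[label=(\roman*)]
\item $b_t$ contains the current $\xi_t$, so $\xi_t/b_t$ is not exactly mean zero. This is handled by symmetry: $b_t$ depends on $\xi_t$ only through $\xi_t^2$.
\item The correlation between $b_s$ and past noise must be controlled. This is easy, because $\xi_s^2=s^2$ is deterministic, so $b_t$ is essentially deterministic up to small gradient terms.
\item $\ell$ must be chosen with $\ell\le L$ for large $T$.
\end{enumerate}
Given (ii), the near-deterministic $b_t$ makes the variance computation a direct second-moment bound, and I expect the main work to be the drift cross-terms $f_T'(x_t)/b_t$, bounded via $\ell\eta\sum 1/b_t\le$ a small constant.
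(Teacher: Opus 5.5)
Your scaling is right, and your route differs from the paper's. You use a symmetric Huber function with vanishing curvature $\ell\asymp\sigma/(\eta\sqrt T)$ and read the bound off $\frac{\ell}{2}\mathbb E[\bar x_T^2]\asymp\ell\eta^2\log T$. The paper instead uses a one-sided hinge with slope $a\asymp\sigma\sqrt{\log T/T}$, the inequality $f_T(x)\ge a(x-a/L)_+$, and a lower bound on $\mathbb E[(Y_T)_+]$. Your approach can be completed, but two steps fail as written.

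\textbf{Item (i) is false for your additive oracle.} Since $g_t^2=f_T'(x_t)^2+2f_T'(x_t)\xi_t+\xi_t^2$, the accumulator $b_t$ depends on the \emph{sign} of $\xi_t$ whenever $x_t\neq0$. A direct computation gives $\mathbb E[\xi_t/b_t\mid\mathcal F_{t-1}]\approx -f_T'(x_t)\sigma^2/(4b_t^3)\neq0$. So the terms $\xi_t/b_t$ are not martingale differences, and the cross terms in $\mathbb E[\bar x_T^2]$ do not vanish. This bias is summable and of lower order, but only because the linear tails give $\sup|f_T'|\le\ell\rho_T\ll\sigma$ pathwise. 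The clean fix is the paper's oracle: $g_t=\pm\gamma$ with probabilities $\tfrac12(1\pm f_T'(x_t)/\gamma)$, $\gamma=\sigma/2$. This makes $g_t^2=\gamma^2$ and $b_t^2=b_0^2+t\gamma^2$ exactly deterministic, so $Y_T=-\sum_s\alpha_s\xi_s$ is a genuine martingale with deterministic weights $\alpha_s=\eta(T-s)/(Tb_s)$. It also needs $|f_T'|\le\gamma$, which again requires the linear tails.

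\textbf{The passage from $\mathbb E[\bar x_T^2]$ to $\mathbb E[f_T(\bar x_T)]$ is missing a tail estimate.} With $f_T$ quadratic only on $[-\rho_T,\rho_T]$, a lower bound on $\mathbb E[\bar x_T^2]$ does not give one on $\mathbb E[f_T(\bar x_T)]\ge\frac{\ell}{2}\mathbb E[\min\{\bar x_T^2,\rho_T^2\}]$. The second moment could sit on rare large values, where $f_T$ grows only linearly; ``using the hinge's linear growth otherwise'' does not rescue this. You need upper-tail control of $\bar x_T$, for instance $\mathbb E[\bar x_T^2\mathbf 1\{|\bar x_T|>\rho_T\}]\le\mathbb E[\bar x_T^4]/\rho_T^2$. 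This can use the sub-Gaussian fourth-moment bound $\mathbb E[Y_T^4]\le 81W_T^2$ from the paper, together with a compatible choice of $\rho_T$. Write $\ell=c\sigma/(\eta\sqrt T)$ and take $\rho_T=C\eta\sqrt{\log T}$, with $C$ large and $cC$ small. Then three things hold:
\begin{itemize}
\item the pathwise drift satisfies $|\mathcal D_T|\le\eta\ell\rho_T\sum_{s<T}b_s^{-1}=O(cC\,\eta\sqrt{\log T})$;
\item $\mathbb E[\bar x_T^4]/\rho_T^2=O(\eta^2\log T/C^2)$;
\item $\ell\rho_T\to0$, which keeps the oracle valid.
\end{itemize}
After this, your bound $\ell\eta^2\log T\asymp\eta\sigma\log T/\sqrt T$ does follow.

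Taking $\rho_T=\infty$ avoids truncation, but then gradients are unbounded. Your justification of (ii) is then no longer pathwise, and tail control re-enters when you bound the accumulator and the drift. So in either variant, the quadratic picture does not remove the paper's anti-concentration step. It replaces the positive-part argument with a truncation argument of the same kind. The paper's one-sided hinge avoids both issues at once, because $f_T'(x_t)\in[0,a]$ makes the drift nonnegative and pathwise small, and $f_T\ge a(\cdot-a/L)_+$ only benefits from large deviations.
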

\begin{proof}[Proof sketch]
Choose a smooth hinge with slope $a\asymp\sigma\sqrt{\log T/T}$ and an unbiased oracle with $g_t\in\{-\gamma,\gamma\}$, where $\gamma=\sigma/2$. Then $b_t^2=b_0^2+t\gamma^2$ is deterministic. The average splits into a mean-zero martingale term and a nonnegative drift. The squared martingale weights sum to order $\eta^2\log T$, so second- and fourth-moment estimates give a positive fluctuation of order $\eta\sqrt{\log T}$; see \eqref{eq:log-variance}--\eqref{eq:log-positive}. Choosing the slope constant small makes the drift smaller than this fluctuation. Multiplication by $a$ yields \eqref{eq:log-lower}. Appendix~\ref{app:log-lb} verifies the oracle and smoothing conditions.
\end{proof}

\begin{remark}\label{rem:log-scope}
The scope of the logarithmic lower bound is smooth-convex average function value, rather than the nonconvex stationarity metric of \citet{jiang2025provable}. It does not exclude horizon-dependent parameter tuning, alternative outputs, or modifications such as projection. In particular, it does not establish that every logarithm in every AdaGrad upper bound is necessary.
\end{remark}

\begin{remark}\label{rem:sgd-consequence}
The same construction has a consequence for inverse-square-root SGD: since $g_t^2=\gamma^2$ on this oracle, AdaGrad is exactly unprojected SGD with deterministic stepsizes $\eta/\sqrt{b_0^2+t\gamma^2}$. More generally, fix $L,\sigma,\lambda,\tau>0$ and consider
\begin{equation}\label{eq:sgd-schedule}
 x_{t+1}=x_t-\frac{\lambda}{\sqrt{t+\tau}}g_t.
\end{equation}
For every sufficiently large $T$, there exists a convex $L$-smooth scalar objective $f_T$, with $x_1=x^\star=0$, for which even the independent additive-noise oracle $g_t=f_T'(x_t)+\xi_t$, $\mathbb P(\xi_t=\sigma/2)=\mathbb P(\xi_t=-\sigma/2)=1/2$, satisfies
\begin{equation}\label{eq:sgd-lower}
 \mathbb E[f_T(\bar x_T)-f_T^\star]
 \ge c_{\mathrm{SGD}}\lambda\sigma^2\frac{\log T}{\sqrt T},
\end{equation}
where $c_{\mathrm{SGD}}>0$ is universal. The final part of Appendix~\ref{app:log-lb} gives the full reduction and checks the independent-noise version. This is a lower bound for a smooth-convex \emph{average iterate}, not the nonsmooth last-iterate lower bound of \citet{harvey2019tight}. We are not aware of an earlier result for this exact combination of assumptions and schedule. The qualification on the schedule is essential: the claim does not extend to all SGD stepsizes, suffix averages, or projected variants.
\end{remark}

\begin{remark}\label{rem:sgd-schedule}
The stepsize schedule matters and cannot be omitted from the statement. For example, for SGD with a constant step $0<\alpha\le1/(2L)$ on a fixed horizon and a conditionally unbiased oracle of variance at most $\sigma^2$, the squared-distance recursion and $\|\nabla f(x)\|^2\le2L(f(x)-f^\star)$ give
\begin{equation}\label{eq:sgd-fixed-horizon}
 \mathbb E[f(\bar x_T)-f^\star]
 \le\frac{\|x_1-x^\star\|^2}{\alpha T}+\alpha\sigma^2.
\end{equation}
Indeed, one update decreases expected squared distance by at least $\alpha\mathbb E[\Delta_t]-\alpha^2\sigma^2$; summation and convexity prove the display. A horizon-dependent choice $\alpha\asymp T^{-1/2}$ therefore gives $O(T^{-1/2})$. On the lower-bound instance $x_1=x^\star$, even the initial-distance term vanishes. The logarithm in \eqref{eq:sgd-lower} is a limitation of the fixed inverse-square-root schedule with uniform averaging, not of the stochastic oracle problem itself. The final part of Appendix~\ref{app:log-lb} also verifies the integrability and distance calculation behind \eqref{eq:sgd-fixed-horizon}.
\end{remark}

\subsection{Noise-adaptive upper bounds under the ABC condition}
The preceding results distinguish instability, heavy-tail confidence dependence, and last-iterate behavior. None prevents a log-free expected average-error bound under bounded iterates. The final theorem makes this complementary guarantee explicit using the classical regret and self-bounding argument \citep{duchi2011adaptive,levy2018online,vaswani2020adaptive,liu2025adagrad}.

\begin{theorem}[Noise-adaptive average-iterate upper bounds]\label{thm:norm upper bound}
Under Assumptions~\ref{ass:smoothness}, \ref{ass:unbiased}, and~\ref{ass:ABC condition}, let $Q=A+2LB$. For every integer $T\ge1$:
\begin{enumerate}[leftmargin=1.5em,itemsep=2pt,topsep=2pt]
\item \textbf{AdaGrad-Norm.} If $\|x_t-x^\star\|\le D$ almost surely for all $t$, set $\Gamma=D^2/(2\eta)+\eta$. Then
\begin{equation}\label{eq:norm-noise-adaptive}
 \mathbb E[f(\bar x_T)-f^\star]
 \le\frac{\Gamma^2Q+\Gamma b_0}{T}+\Gamma\sqrt{\frac CT}.
\end{equation}
\item \textbf{AdaGrad.} If $|x_{t,i}-x_i^\star|\le D_i$ almost surely for all $t,i$, set $\Gamma_i=D_i^2/(2\eta)+\eta$, $\Lambda=(\sum_i\Gamma_i^2)^{1/2}$, and $\beta_0=(\sum_i b_{0,i}^2)^{1/2}$. Then
\begin{equation}\label{eq:coordinate-noise-adaptive}
 \mathbb E[f(\bar x_T)-f^\star]
 \le\frac{\Lambda^2Q+\Lambda\beta_0}{T}+\Lambda\sqrt{\frac CT}.
\end{equation}
\end{enumerate}
The radii are nonnegative, finite, and independent of $t$. In particular, $C=0$ gives $O(T^{-1})$; bounded variance permits $Q=2L$, $C=\sigma^2$ and gives $O(T^{-1}+\sigma T^{-1/2})$.
\end{theorem}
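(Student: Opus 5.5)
The plan is the classical regret-plus-self-bounding argument, carried out first for AdaGrad-Norm and then coordinate by coordinate. \textbf{Step 1 (deterministic regret bound).} Expanding $\|x_{t+1}-x^\star\|^2=\|x_t-x^\star\|^2-2\eta\langle g_t,x_t-x^\star\rangle/b_t+\eta^2\|g_t\|^2/b_t^2$ gives
\[
 \langle g_t,x_t-x^\star\rangle=\frac{b_t}{2\eta}\bigl(\|x_t-x^\star\|^2-\|x_{t+1}-x^\star\|^2\bigr)+\frac{\eta}{2}\frac{\|g_t\|^2}{b_t}.
\]
I will sum over $t\le T$ and use Abel summation with $b_t$ nondecreasing. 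Together with $\|x_t-x^\star\|\le D$ almost surely, the first part is at most $D^2b_T/(2\eta)$. For the second part I use the standard inequality $\sum_t\|g_t\|^2/b_t\le 2b_T$, which holds because the current gradient is included in $b_t$. This yields the pathwise regret bound
\[
 \sum_{t=1}^T\langle g_t,x_t-x^\star\rangle\le\Gamma b_T .
\]

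\textbf{Step 2 (expectation and self-bounding).} Since $x_t$ is $\mathcal F_{t-1}$-measurable, Assumption~\ref{ass:unbiased} converts the left side in expectation into $\mathbb E\sum_t\langle\nabla f(x_t),x_t-x^\star\rangle$. Convexity bounds this from below by $S:=\mathbb E\sum_t\Delta_t$. Jensen's inequality then gives $T\,\mathbb E[f(\bar x_T)-f^\star]\le S$. On the right side, Jensen gives $\mathbb E b_T\le(b_0^2+\mathbb E\sum_t\|g_t\|^2)^{1/2}$. I then apply the ABC condition \eqref{eq:ABC} via the tower property, together with the smoothness consequence $\|\nabla f(x)\|^2\le 2L\Delta$. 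This bounds $\mathbb E\sum_t\|g_t\|^2$ by $QS+CT$. The result is the implicit inequality
\[
 S\le\Gamma\sqrt{b_0^2+QS+CT}\le\Gamma b_0+\Gamma\sqrt{QS}+\Gamma\sqrt{CT}.
\]
Integrability is not an issue, since the iterates are bounded and the ABC condition makes every second moment finite.

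\textbf{Step 3 (solve the quadratic inequality).} Setting $u=\sqrt S$, Step 2 gives $u^2-\Gamma\sqrt Q\,u-a\le0$ with $a=\Gamma b_0+\Gamma\sqrt{CT}$, so $u$ is at most the larger root. This is the step I expect to be the main obstacle, because the constants in \eqref{eq:norm-noise-adaptive} are tight. Naive AM--GM ($\Gamma\sqrt{QS}\le\Gamma^2Q/2+S/2$) gives $S\le\Gamma^2Q+2a$, which loses a factor $2$ on $a$. Solving the quadratic exactly also does not give $\Gamma^2Q+a$ in general. I would therefore first try to keep the square root unsplit and solve $S^2\le\Gamma^2(b_0^2+QS+CT)$ exactly. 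This gives
\[
 S\le\tfrac12\Gamma^2Q+\sqrt{\tfrac14\Gamma^4Q^2+\Gamma^2(b_0^2+CT)}\le\Gamma^2Q+\Gamma b_0+\Gamma\sqrt{CT},
\]
using $\sqrt{x+y+z}\le\sqrt x+\sqrt y+\sqrt z$. Dividing by $T$ then gives \eqref{eq:norm-noise-adaptive} exactly.

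\textbf{Step 4 (coordinate-wise AdaGrad).} I repeat Step 1 in each coordinate $i$, with $D_i$ and $b_{t,i}$ in place of $D$ and $b_t$. This gives $\sum_t g_{t,i}(x_{t,i}-x^\star_i)\le\Gamma_ib_{T,i}$. Summing over $i$ and applying Cauchy--Schwarz gives $\sum_i\Gamma_ib_{T,i}\le\Lambda(\sum_ib_{T,i}^2)^{1/2}=\Lambda(\beta_0^2+\sum_t\|g_t\|^2)^{1/2}$. From here Steps 2 and 3 go through verbatim with $\Gamma$ replaced by $\Lambda$ and $b_0$ by $\beta_0$, which yields \eqref{eq:coordinate-noise-adaptive}.

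\textbf{Special cases.} Taking $C=0$ gives the $O(T^{-1})$ rate. By Remark~\ref{remark}, bounded variance gives $(A,B,C)=(0,1,\sigma^2)$, hence $Q=2L$ and $C=\sigma^2$, which is the stated $O(T^{-1}+\sigma T^{-1/2})$ bound.
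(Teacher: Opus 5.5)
Your proposal is correct and follows essentially the same route as the paper: the pathwise regret bound $\sum_t\langle g_t,x_t-x^\star\rangle\le\Gamma b_T$ established before taking expectations, then Jensen combined with the ABC condition and the self-bound $\|\nabla f\|^2\le 2L\Delta$ to reach $S\le\Gamma\sqrt{b_0^2+QS+CT}$. You then solve this quadratic in $S$ without splitting the square root, which recovers the stated constants, and use Cauchy--Schwarz across coordinates for AdaGrad. The only cosmetic difference is in the coordinate-wise case: you apply Cauchy--Schwarz pathwise and then Jensen, whereas the paper applies Jensen coordinate by coordinate first and then Cauchy--Schwarz; both orders give the same inequality.
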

\begin{proof}[Proof sketch]
The key is to retain the objective gap in the second-moment bound instead of replacing it by a uniform constant. For $S_T=\sum_{t=1}^T\mathbb E[\Delta_t]$, smoothness and the ABC condition give $\sum_{t=1}^T\mathbb E[\|g_t\|^2]\le QS_T+CT$. A pathwise regret inequality removes the correlated denominator before taking expectations, yielding
$S_T\le\Gamma\sqrt{b_0^2+QS_T+CT}$ for AdaGrad-Norm. For AdaGrad, coordinate-wise regret and Cauchy--Schwarz give the same inequality with $(\Gamma,b_0)$ replaced by $(\Lambda,\beta_0)$; see \eqref{eq:norm-self-consistency} and \eqref{eq:coordinate-self-consistency}. Solving these quadratic inequalities produces the noise-adaptive bounds. Appendix~\ref{app:positive} supplies separate proofs for the two algorithms.
\end{proof}

\begin{remark}\label{rem:abc-scope}
Noise adaptation here is conditional on stability, although the algorithms need not know $A,B,C,L$ or the stability radii. For a common coordinate initialization, $\beta_0=\sqrt d\,b_0$. The case $C=0$ permits state-dependent stochastic noise; it does not require a deterministic oracle. The root-form refinements and the two separate algorithm proofs are in Appendix~\ref{app:positive}. The theorem assumes stability and therefore does not contradict the lower bounds in Sections~\ref{sec:no-bounded} and~\ref{sec:log-lb}, where a common trajectory radius is absent.
\end{remark}

\section{Conclusion}
Our first result rules out any prescribed uniform vanishing expected last-iterate rate for AdaGrad-Norm and AdaGrad under conditional sub-Gaussian oracles, even within an arbitrarily small trajectory radius. One fixed instance violates the proposed rate along a subsequence while still converging; only the pulse schedule depends on that rate.

Under finite variance, a fixed infinite-horizon instance also defeats polynomial average-iterate rates with polylogarithmic confidence dependence within any prescribed radius. A persistent one-sided displacement survives averaging, but the conclusion remains compatible with fixed-confidence convergence and Markov bounds.

Without a common radius, horizon-dependent families give constant worst-case expected average error under finite variance or a logarithmic lower bound under bounded noise. The latter also applies to fixed inverse-square-root SGD, not every stepsize or output. Under bounded iterates, classical regret analysis gives noise-adaptive ABC upper bounds. Identifying sampling assumptions that restore uniform last-iterate rates for the unchanged updates remains an important direction.

\clearpage
\begingroup
\interlinepenalty=10000
\renewcommand{\bibfont}{\small}
\setlength{\bibsep}{4pt plus 1pt}
\renewcommand{\bibsection}{\section*{\refname}\addcontentsline{toc}{section}{\refname}}
\bibliographystyle{plainnat}
\bibliography{all_noise_references}

@article{duchi2011adaptive,
  title={Adaptive subgradient methods for online learning and stochastic optimization},
  author={Duchi, John and Hazan, Elad and Singer, Yoram},
  journal={Journal of Machine Learning Research},
  volume={12},
  number={61},
  pages={2121--2159},
  year={2011}
}

@inproceedings{mcmahan2010adaptive,
  title={Adaptive bound optimization for online convex optimization},
  author={McMahan, H. Brendan and Streeter, Matthew J.},
  booktitle={COLT},
  pages={244--256},
  year={2010}
}

@article{ward2020adagrad,
  title={{AdaGrad} stepsizes: Sharp convergence over nonconvex landscapes},
  author={Ward, Rachel and Wu, Xiaoxia and Bottou, Leon},
  journal={Journal of Machine Learning Research},
  volume={21},
  number={219},
  pages={1--30},
  year={2020}
}

@inproceedings{li2019convergence,
  title={On the convergence of stochastic gradient descent with adaptive stepsizes},
  author={Li, Xiaoyu and Orabona, Francesco},
  booktitle={The 22nd international conference on artificial intelligence and statistics},
  pages={983--992},
  year={2019},
  organization={PMLR}
}

@inproceedings{liu2023convergence,
  title={On the Convergence of {AdaGrad} (Norm) on {$\mathbb{R}^d$}: Beyond Convexity, Non-Asymptotic Rate and Acceleration},
  author={Liu, Zijian and Nguyen, Ta Duy and Ene, Alina and Nguyen, Huy},
  booktitle={International Conference on Learning Representations},
  year={2023},
  organization={International Conference on Learning Representations}
}

@inproceedings{attia2023sgd,
  title={{SGD} with {AdaGrad} stepsizes: Full adaptivity with high probability to unknown parameters, unbounded gradients and affine variance},
  author={Attia, Amit and Koren, Tomer},
  booktitle={International Conference on Machine Learning},
  pages={1147--1171},
  year={2023},
  organization={PMLR}
}

@inproceedings{hong2024revisiting,
  title={Revisiting convergence of {AdaGrad} with relaxed assumptions},
  author={Hong, Yusu and Lin, Junhong},
  booktitle={Proceedings of the Fortieth Conference on Uncertainty in Artificial Intelligence},
  pages={1727--1750},
  year={2024}
}

@inproceedings{wang2023convergence,
  title={Convergence of {AdaGrad} for non-convex objectives: Simple proofs and relaxed assumptions},
  author={Wang, Bohan and Zhang, Huishuai and Ma, Zhiming and Chen, Wei},
  booktitle={The Thirty Sixth Annual Conference on Learning Theory},
  pages={161--190},
  year={2023},
  organization={PMLR}
}

@inproceedings{kavis2022high,
  title={High Probability Bounds for a Class of Nonconvex Algorithms with {AdaGrad} Stepsize},
  author={Kavis, Ali and Levy, Kfir and Cevher, Volkan},
  booktitle={10th International Conference on Learning Representations (ICLR)},
  year={2022}
}

@inproceedings{faw2022power,
  title={The power of adaptivity in {SGD}: Self-tuning step sizes with unbounded gradients and affine variance},
  author={Faw, Matthew and Tziotis, Isidoros and Caramanis, Constantine and Mokhtari, Aryan and Shakkottai, Sanjay and Ward, Rachel},
  booktitle={Conference on Learning Theory},
  pages={313--355},
  year={2022},
  organization={PMLR}
}

@inproceedings{faw2023beyond,
  title={Beyond uniform smoothness: A stopped analysis of adaptive {SGD}},
  author={Faw, Matthew and Rout, Litu and Caramanis, Constantine and Shakkottai, Sanjay},
  booktitle={The Thirty Sixth Annual Conference on Learning Theory},
  pages={89--160},
  year={2023},
  organization={PMLR}
}

@inproceedings{liu2023high,
  title={High probability convergence of stochastic gradient methods},
  author={Liu, Zijian and Nguyen, Ta Duy and Nguyen, Thien Hang and Ene, Alina and Nguyen, Huy},
  booktitle={International conference on machine learning},
  pages={21884--21914},
  year={2023},
  organization={PMLR}
}

@inproceedings{liu2025adagrad,
  title={{AdaGrad} under Anisotropic Smoothness},
  author={Liu, Yuxing and Pan, Rui and Zhang, Tong},
  booktitle={13th International Conference on Learning Representations, ICLR 2025},
  year={2025},
  organization={International Conference on Learning Representations, ICLR}
}

@inproceedings{jiang2025provable,
  title={Provable Complexity Improvement of {AdaGrad} over {SGD}: Upper and Lower Bounds in Stochastic Non-Convex Optimization},
  author={Jiang, Ruichen and Maladkar, Devyani and Mokhtari, Aryan},
  booktitle={The Thirty Eighth Annual Conference on Learning Theory},
  pages={3124--3158},
  year={2025},
  organization={PMLR}
}

@inproceedings{harvey2019tight,
  title={Tight analyses for non-smooth stochastic gradient descent},
  author={Harvey, Nicholas JA and Liaw, Christopher and Plan, Yaniv and Randhawa, Sikander},
  booktitle={Conference on Learning Theory},
  pages={1579--1613},
  year={2019},
  organization={PMLR}
}

@book{nemirovskij1983problem,
  title={Problem complexity and method efficiency in optimization},
  author={Nemirovskij, Arkadij Semenovi{\v{c}} and Yudin, David Borisovich},
  year={1983},
  publisher={Wiley-Interscience}
}

@article{agarwal2009information,
  title={Information-theoretic lower bounds on the oracle complexity of convex optimization},
  author={Agarwal, Alekh and Wainwright, Martin J and Bartlett, Peter and Ravikumar, Pradeep},
  journal={Advances in Neural Information Processing Systems},
  volume={22},
  year={2009}
}

@article{raginsky2011information,
  title={Information-based complexity, feedback and dynamics in convex programming},
  author={Raginsky, Maxim and Rakhlin, Alexander},
  journal={IEEE Transactions on Information Theory},
  volume={57},
  number={10},
  pages={7036--7056},
  year={2011},
  publisher={IEEE}
}

@article{arjevani2023lower,
  title={Lower bounds for non-convex stochastic optimization},
  author={Arjevani, Yossi and Carmon, Yair and Duchi, John C and Foster, Dylan J and Srebro, Nathan and Woodworth, Blake},
  journal={Mathematical Programming},
  volume={199},
  number={1--2},
  pages={165--214},
  year={2023},
  publisher={Springer}
}

@article{nemirovski2009robust,
  title={Robust stochastic approximation approach to stochastic programming},
  author={Nemirovski, Arkadi and Juditsky, Anatoli and Lan, Guanghui and Shapiro, Alexander},
  journal={SIAM Journal on optimization},
  volume={19},
  number={4},
  pages={1574--1609},
  year={2009},
  publisher={SIAM}
}

@article{saad2025new,
  title={New lower bounds for stochastic non-convex optimization through divergence decomposition},
  author={Saad, El Mehdi and Lee, Wei-Cheng and Orabona, Francesco},
  journal={arXiv preprint arXiv:2502.14060},
  year={2025}
}

@inproceedings{shamir2013stochastic,
  title={Stochastic gradient descent for non-smooth optimization: Convergence results and optimal averaging schemes},
  author={Shamir, Ohad and Zhang, Tong},
  booktitle={International conference on machine learning},
  pages={71--79},
  year={2013},
  organization={PMLR}
}

@inproceedings{zhang2004solving,
  title={Solving large scale linear prediction problems using stochastic gradient descent algorithms},
  author={Zhang, Tong},
  booktitle={Proceedings of the twenty-first international conference on Machine learning},
  pages={116},
  year={2004}
}

@article{bertsekas2000gradient,
  title={Gradient convergence in gradient methods with errors},
  author={Bertsekas, Dimitri P and Tsitsiklis, John N},
  journal={SIAM Journal on Optimization},
  volume={10},
  number={3},
  pages={627--642},
  year={2000},
  publisher={SIAM}
}

@inproceedings{liu2022almost,
  title={On almost sure convergence rates of stochastic gradient methods},
  author={Liu, Jun and Yuan, Ye},
  booktitle={Conference on Learning Theory},
  pages={2963--2983},
  year={2022},
  organization={PMLR}
}

@misc{orabona2020almost,
  author = {Orabona, Francesco},
  title = {Almost Sure Convergence of {SGD} on Smooth Non-Convex Functions},
  howpublished = {Blog post},
  year = {2020},
  url = {https://parameterfree.com/2020/10/05/almost-sure-convergence-of-sgd-on-smooth-non-convex-functions/}
}

@misc{orabona2020unbounded,
  author = {Orabona, Francesco},
  title = {Last Iterate of {SGD} Converges (Even in Unbounded Domains)},
  howpublished = {Blog post},
  year = {2020},
  url = {https://parameterfree.com/2020/08/07/last-iterate-of-sgd-converges-even-in-unbounded-domains/}
}

@article{liu2023revisiting,
  title={Revisiting the last-iterate convergence of stochastic gradient methods},
  author={Liu, Zijian and Zhou, Zhengyuan},
  journal={arXiv preprint arXiv:2312.08531},
  year={2023}
}

@inproceedings{li2022last,
  title={On the last iterate convergence of momentum methods},
  author={Li, Xiaoyu and Liu, Mingrui and Orabona, Francesco},
  booktitle={International Conference on Algorithmic Learning Theory},
  pages={699--717},
  year={2022},
  organization={PMLR}
}

@article{hong2024convergence,
  title={On convergence of {Adam} for stochastic optimization under relaxed assumptions},
  author={Hong, Yusu and Lin, Junhong},
  journal={Advances in Neural Information Processing Systems},
  volume={37},
  pages={10827--10877},
  year={2024}
}

@article{gorbunov2020stochastic,
  title={Stochastic optimization with heavy-tailed noise via accelerated gradient clipping},
  author={Gorbunov, Eduard and Danilova, Marina and Gasnikov, Alexander},
  journal={Advances in Neural Information Processing Systems},
  volume={33},
  pages={15042--15053},
  year={2020}
}

@article{gorbunov2021near,
  title={Near-optimal high probability complexity bounds for non-smooth stochastic optimization with heavy-tailed noise},
  author={Gorbunov, Eduard and Danilova, Marina and Shibaev, Innokentiy and Dvurechensky, Pavel and Gasnikov, Alexander},
  journal={arXiv preprint arXiv:2106.05958v1},
  year={2021},
}

@article{cutkosky2021high,
  title={High-probability bounds for non-convex stochastic optimization with heavy tails},
  author={Cutkosky, Ashok and Mehta, Harsh},
  journal={Advances in Neural Information Processing Systems},
  volume={34},
  pages={4883--4895},
  year={2021}
}

@inproceedings{li2020high,
  title={A High Probability Analysis of Adaptive {SGD} with Momentum},
  author={Li, Xiaoyu and Orabona, Francesco},
  booktitle={Workshop on Beyond First Order Methods in ML Systems at ICML'20},
  year={2020}
}

@inproceedings{liu2024high,
  title={High-probability bound for non-smooth non-convex stochastic optimization with heavy tails},
  author={Liu, Langqi and Wang, Yibo and Zhang, Lijun},
  booktitle={Forty-first International Conference on Machine Learning},
  year={2024}
}

@inproceedings{chezhegov2025clipping,
  title={Clipping Improves {Adam}-Norm and {AdaGrad}-Norm when the Noise Is Heavy-Tailed},
  author={Chezhegov, Savelii and Klyukin, Yaroslav and Semenov, Andrei and Beznosikov, Aleksandr and Gasnikov, Alexander and Horv{\'a}th, Samuel and Tak{\'a}{\v{c}}, Martin and Gorbunov, Eduard},
  booktitle={International Conference on Machine Learning},
  pages={10269--10333},
  year={2025},
  organization={PMLR}
}

@article{defossez2020simple,
  title={A simple convergence proof of {Adam} and {AdaGrad}},
  author={D{\'e}fossez, Alexandre and Bottou, L{\'e}on and Bach, Francis and Usunier, Nicolas},
  journal={arXiv preprint arXiv:2003.02395},
  year={2020}
}

@article{yang2023two,
  title={Two sides of one coin: the limits of untuned {SGD} and the power of adaptive methods},
  author={Yang, Junchi and Li, Xiang and Fatkhullin, Ilyas and He, Niao},
  journal={Advances in Neural Information Processing Systems},
  volume={36},
  pages={74257--74288},
  year={2023}
}

@inproceedings{mukkamala2017variants,
  title={Variants of {RMSProp} and {AdaGrad} with logarithmic regret bounds},
  author={Mukkamala, Mahesh Chandra and Hein, Matthias},
  booktitle={International conference on machine learning},
  pages={2545--2553},
  year={2017},
  organization={PMLR}
}

@article{reddi2019convergence,
  title={On the convergence of {Adam} and beyond},
  author={Reddi, Sashank J and Kale, Satyen and Kumar, Sanjiv},
  journal={arXiv preprint arXiv:1904.09237},
  year={2019}
}

@article{yousefian2012stochastic,
  title={On stochastic gradient and subgradient methods with adaptive steplength sequences},
  author={Yousefian, Farzad and Nedi{\'c}, Angelia and Shanbhag, Uday V},
  journal={Automatica},
  volume={48},
  number={1},
  pages={56--67},
  year={2012},
  publisher={Elsevier}
}

@article{khaled2020better,
  title={Better theory for {SGD} in the nonconvex world},
  author={Khaled, Ahmed and Richt{\'a}rik, Peter},
  journal={arXiv preprint arXiv:2002.03329},
  year={2020}
}

@article{preobrazhenskaia2026last,
  title={Last Iterate Convergence of {AdaGrad}-Norm for Convex Non-Smooth Optimization},
  author={Preobrazhenskaia, Margarita and Sidorov, Makar and Preobrazhenskii, Igor and Gorbunov, Eduard},
  journal={arXiv preprint arXiv:2604.10728},
  year={2026}
}

@inproceedings{levy2018online,
  title={Online Adaptive Methods, Universality and Acceleration},
  author={Levy, Kfir Y. and Yurtsever, Alp and Cevher, Volkan},
  booktitle={Advances in Neural Information Processing Systems},
  volume={31},
  year={2018}
}

@article{vaswani2020adaptive,
  title={Adaptive Gradient Methods Converge Faster with Over-Parameterization (but you should do a line-search)},
  author={Vaswani, Sharan and Laradji, Issam and Kunstner, Frederik and Meng, Si Yi and Schmidt, Mark and Lacoste-Julien, Simon},
  journal={arXiv preprint arXiv:2006.06835},
  year={2020}
}

@article{jin2022convergence,
  title={On the Convergence of {mSGD} and {AdaGrad} for Stochastic Optimization},
  author={Jin, Ruinan and Xing, Yu and He, Xingkang},
  journal={arXiv preprint arXiv:2201.11204},
  year={2022}
}

@article{jin2026longrun,
  title={Long-Run Dynamics of {AdaGrad-Norm}: Trajectory Stability and Asymptotic Stationarity},
  author={Jin, Ruinan and Wang, Xiaoyu},
  journal={arXiv preprint arXiv:2601.01853v2},
  year={2026},
  note={Version 2, August 30, 2026}
}

@article{garrigos2025last,
  title={Last-Iterate Complexity of {SGD} for Convex and Smooth Stochastic Problems},
  author={Garrigos, Guillaume and Cortild, Daniel and Ketels, Lucas and Peypouquet, Juan},
  journal={arXiv preprint arXiv:2507.14122},
  year={2025}
}
\appendix
\newpage
\section*{APPENDIX}
The proofs follow the original order of the main results. Appendix~\ref{app:last} first proves the absence of a universal last-iterate rate on a fixed instance. Appendix~\ref{app:hp} treats the uniformly averaged output at shrinking confidence levels, still on a fixed infinite-horizon bounded instance. Appendices~\ref{app:no-bounded} and~\ref{app:log-lb} then construct horizon-dependent expected average-error lower bounds, and the final part of Appendix~\ref{app:log-lb} proves the SGD consequence. Appendix~\ref{app:positive} provides separate proofs of the two noise-adaptive upper bounds. Each lower-bound construction for AdaGrad is scalar, so the two algorithms have the same trajectory.

\section{Proof of Theorem~\ref{thm:no last-iterate rate}}\label{app:last}
We begin with the last-iterate limitation, for which the entire oracle must remain fixed while the evaluation times tend to infinity. The construction below separates the timing of stochastic variation from its magnitude, so a bounded pulse can reveal slow convergence at an arbitrarily late time.

\begin{proof}
The construction starts at the minimizer and keeps the accumulator unchanged until the first nonzero pulse. Choose the objective curvature and pulse size so that a pulse respects the stability radius and every later noise-free update contracts. Specifically, set
\begin{equation}\label{eq:spike-constants}
 \ell=\min\{L,b_0/(2\eta)\},\qquad
 d_0=\min\{D/2,\eta/2\},\qquad
 M=\frac{b_0d_0}{\sqrt{\eta^2-d_0^2}},\qquad
 f(x)=\frac\ell2x^2.
\end{equation}
Initialize at $x_1=x^\star=0$, and let
\[
 q=\min\left\{\frac14,\frac{e-1}{\exp(M^2/\sigma^2)-1}\right\},
 \qquad q_n=\frac{q}{(n+1)^2}.
\]
Given a deterministic strictly increasing sequence $(\tau_n)_{n\ge1}$ of positive integers, allow noise at time $t=\tau_n$ only if no earlier nonzero pulse has occurred. At such an active time, draw $\xi_t=M,-M,0$ with probabilities $q_n/2,q_n/2,1-q_n$, using fresh randomness. At all other times, including every time after the first nonzero pulse, set $\xi_t=0$. Define $g_t=\ell x_t+\xi_t$.

The noise is conditionally symmetric and hence unbiased. At active times,
\begin{equation}\label{eq:spike-mgf}
 \mathbb E[e^{\xi_t^2/\sigma^2}\mid\mathcal F_{t-1}]
 =1+q_n(e^{M^2/\sigma^2}-1)\le e;
\end{equation}
at inactive times the conditional expectation is $1$. Thus one fixed infinite-horizon oracle satisfies the required moment bound for any choice of the schedule.

With the oracle specified, we next verify stability and quantify the displacement. Before the first pulse, $x_t=0$ and $b_{t-1}=b_0$, regardless of how much time has elapsed. If the first pulse occurs at $\tau_n$, the next iterate satisfies
\[
 b_{\tau_n}=\sqrt{b_0^2+M^2},\qquad
 |x_{\tau_n+1}|=\frac{\eta M}{\sqrt{b_0^2+M^2}}=d_0.
\]
Afterward the noise is zero and
\begin{equation}\label{eq:spike-contraction}
 x_{t+1}=(1-\eta\ell/b_t)x_t,\qquad 0<\eta\ell/b_t\le1/2.
\end{equation}
Consequently $|x_t|\le d_0\le D$ on every path.

Let $A_n$ be the event that the first nonzero pulse occurs at $\tau_n$. Since $\sum_{j\ge1}q_j<q\le1/4$,
\[
 \mathbb P(A_n)=q_n\prod_{j<n}(1-q_j)\ge\frac34q_n.
\]
With $T_n=\tau_n+1$, the fixed displacement on $A_n$ therefore yields
\begin{equation}\label{eq:spike-lower}
 \mathbb E[\Delta_{T_n}]\ge\frac{3\ell d_0^2q}{8(n+1)^2}.
\end{equation}
For the explicit schedule $\tau_n=\lceil\exp(\exp n)\rceil$, we have $\log\log T_n\ge n$. Hence \eqref{eq:double-log-last} follows with $c=3\ell d_0^2q/8$.

For an arbitrary prescribed sequence $r_T>0$ tending to zero, choose the times recursively so that $r_{\tau_n+1}\le(n+1)^{-4}$. Such increasing times exist because $r_T\to0$. Equation~\eqref{eq:spike-lower} then gives
\[
 \frac{\mathbb E[\Delta_{T_n}]}{r_{T_n}}
 \ge\frac{3\ell d_0^2q}{8}(n+1)^2\longrightarrow\infty.
\]
Once this schedule has been chosen, both the objective and the entire oracle are fixed. Only the evaluation times change along the subsequence.

To complete the argument, we verify that the lower bound concerns speed, not failure of convergence. Indeed, every path has at most one nonzero pulse. After it occurs,
$b_t^2\le b_0^2+M^2+\ell^2d_0^2t$, so $\sum_t\eta\ell/b_t=\infty$. Equation~\eqref{eq:spike-contraction} and $1-u\le e^{-u}$ imply $x_t\to0$ on that path. A path with no pulse remains at zero. Since $0\le\Delta_t\le\ell d_0^2/2$, bounded convergence also gives $\mathbb E[\Delta_t]\to0$. This verifies convergence only for the constructed instance; no general fixed-instance convergence theorem is used or asserted.

\end{proof}

\section{Proof of Theorem~\ref{thm:hp}}\label{app:hp}
The preceding proof exploits an accumulator that remains small; we now study the opposite mechanism for the averaged output. A finite cascade increases normalization without leaving a prescribed radius, and the resulting displacement persists long enough to survive uniform averaging.

\begin{proof}
A single sufficiently large gradient can cause a displacement close to $\eta$, so it need not respect a prescribed radius $D<\eta$. We instead prescribe alternating small displacements and fix their law independently of the eventual evaluation horizons. Let
\begin{equation}\label{eq:hp-constants}
 d_0=\min\{D/4,\eta/4\},\qquad
 \ell=\min\{L,b_0/(2\eta)\},\qquad
 f(x)=\frac\ell2x^2,\qquad x_1=x^\star=0.
\end{equation}
To keep each designated displacement equal to $d_0$, define
\[
 r=\frac{d_0}{\sqrt{\eta^2-d_0^2}},\qquad R=\sqrt{1+r^2}>1,
 \qquad M_j=rb_0R^{j-1},\qquad p_j=\min\{1/4,\sigma^2/(4M_j^2)\}.
\]
If the first $j$ gradients follow the designated signs $g_j=(-1)^{j+1}M_j$, then
\begin{equation}\label{eq:hp-designated}
 b_j=b_0R^j,\qquad \frac{\eta M_j}{b_j}=d_0,\qquad
 x_{j+1}=\begin{cases}-d_0,&j\text{ odd},\\0,&j\text{ even}.
 \end{cases}
\end{equation}
These identities follow by induction from $M_j=rb_{j-1}$ on that path and $\eta r/R=d_0$.

We now embed this path in an unbiased oracle. Draw an internal finite odd integer $N$ with
$\mathbb P(N=2k+1)=2^{-(k+1)}$ for $k\ge0$, independently of the fresh random choices used below. The oracle is active at time $j$ only if $j\le N$ and every preceding gradient followed its designated sign. Write $h_j=\ell x_j$. On an active branch, use
\begin{equation}\label{eq:hp-oracle}
 g_j=\begin{cases}
 +M_j,&\text{with probability }\frac{p_j}{2}(1+h_j/M_j),\\
 -M_j,&\text{with probability }\frac{p_j}{2}(1-h_j/M_j),\\
 h_j,&\text{with probability }1-p_j.
 \end{cases}
\end{equation}
After an outcome different from the designated sign, or after time $N$, the oracle permanently returns $g_j=h_j$. Thus one law specifies the oracle for the entire infinite trajectory; no later choice of $T_n$ changes it.

The designated path explains the mechanism, but the theorem requires all outcomes to satisfy the oracle and stability assumptions. We therefore check the active branch and every possible way of leaving it.

On an active branch, \eqref{eq:hp-designated} gives $|x_j|\le d_0$, and
\[
 |h_j|\le\ell d_0\le\frac{b_0d_0}{2\eta}\le\frac{M_1}{2}\le\frac{M_j}{2}.
\]
The probabilities in \eqref{eq:hp-oracle} are consequently valid. Let $\mathcal G_{j-1}=\sigma(N,g_1,\ldots,g_{j-1})$, which also reveals whether the oracle is active. On an active branch, direct calculation gives
\begin{equation}\label{eq:hp-moments}
 \mathbb E[g_j\mid\mathcal G_{j-1}]=h_j,\qquad
 \mathbb E[(g_j-h_j)^2\mid\mathcal G_{j-1}]
 =p_j(M_j^2-h_j^2)\le\frac{\sigma^2}{4}.
\end{equation}
On an inactive branch, the mean remains $h_j$ and the variance is zero. Conditioning these statements on $\mathcal F_{j-1}\subseteq\mathcal G_{j-1}$ proves Assumptions~\ref{ass:unbiased} and~\ref{ass:bounded variance noise} for the natural filtration. The internal draw of $N$ need not be observable to the algorithm.

For stability, the only undesignated sign sends $0$ to $d_0$ or $-d_0$ to $-2d_0$. A deterministic-gradient outcome instead contracts toward zero, since
$0\le\eta\ell/b_j\le1/2$. After deactivation every update is such a contraction. Thus
\begin{equation}\label{eq:hp-radius}
 |x_j-x^\star|\le2d_0\le D\qquad\text{for every }j\ge1\text{ on every path}.
\end{equation}
No relation such as $D\ge\eta$ is required.

Having verified the assumptions globally, we can focus on rare paths on which the cascade lasts long enough to make later updates very small. For odd $n$, let $E_n$ be the event that $N=n$ and the first $n$ gradients all follow the designated signs. Each such sign has conditional probability at least $p_j/4$. Set
\[
 p_\star=\min\{1/4,\sigma^2/(4r^2b_0^2)\}>0,\qquad
 \pi_n=2^{-(n+1)/2}(p_\star/4)^nR^{-n(n-1)}.
\]
Since $p_j\ge p_\star R^{-2(j-1)}$,
\begin{equation}\label{eq:hp-probability}
 \mathbb P(E_n)\ge2^{-(n+1)/2}\prod_{j=1}^n(p_j/4)\ge\pi_n.
\end{equation}
On $E_n$, $x_{n+1}=-d_0$, $b_n=b_0R^n$, and all subsequent noise vanishes. For sufficiently large odd $n$, choose
\begin{equation}\label{eq:hp-horizon}
 T_n=\left\lfloor\sqrt{b_0R^n}\right\rfloor.
\end{equation}
Then $T_n\ge2(n+1)$ and $\eta\ell T_n/(b_0R^n)\le1/2$. Monotonicity of the accumulator and Bernoulli's inequality imply, for $n+1\le t\le T_n$,
\[
 |x_t|\ge d_0\left(1-\frac{\eta\ell}{b_0R^n}\right)^{t-n-1}
 \ge d_0\left(1-\frac{\eta\ell T_n}{b_0R^n}\right)\ge\frac{d_0}{2}.
\]
All earlier iterates on this event are also nonpositive. Consequently,
\begin{equation}\label{eq:hp-average}
 |\bar x_{T_n}|\ge\frac{T_n-n}{T_n}\frac{d_0}{2}\ge\frac{d_0}{4},
 \qquad f(\bar x_{T_n})-f^\star\ge\frac{\ell d_0^2}{32}=:c_\star.
\end{equation}
Taking $\delta_n=\pi_n/2$ proves the probability statement in \eqref{eq:hp-lower}. To compare confidence and horizon, observe that
\begin{align*}
 \log(1/\delta_n)
 &=\log2+\frac{n+1}{2}\log2+n\log(4/p_\star)+n(n-1)\log R=O(n^2),\\
 \log T_n&=\frac n2\log R+O(1).
\end{align*}
This gives the second statement in \eqref{eq:hp-lower} with a fixed parameter-dependent $K$. The sequence over sufficiently large odd $n$ is eventually strictly increasing; relabeling it yields the sequence asserted in the theorem.

Finally, for every fixed polynomial $P$ and $a,b>0$,
$T_n^{-a}+P(\log(1/\delta_n))T_n^{-b}\to0$.
Thus the right-hand side of \eqref{eq:hp-excluded} is eventually smaller than $c_\star$, while its failure probability is at least $2\delta_n>\delta_n$. This proves the claimed impossibility on one fixed bounded instance.

For clarity, the same rate formula and constants are required for every pair $(T,\delta)$, but the theorem does not require a single probability event holding simultaneously over all times. This quantitative requirement should not be confused with qualitative convergence at a fixed confidence level. In this construction the internal odd integer $N$ is finite almost surely, and noise is permanently switched off no later than time $N+1$. Thereafter $x_{t+1}=(1-\eta\ell/b_t)x_t$, with $0<\eta\ell/b_t\le1/2$. On each path, boundedness gives $b_t^2\le K(\omega)+4\ell^2d_0^2t$ for a finite path-dependent constant $K(\omega)$, so $\sum_t\eta\ell/b_t=\infty$. The contraction product tends to zero, hence $x_t\to0$ and $\bar x_T\to0$ almost surely. Bounded convergence also gives $\mathbb E[f(\bar x_T)-f^\star]\to0$. What fails is precisely the rate/confidence combination in \eqref{eq:hp-excluded}; the rare event probabilities are not bounded away from zero as $n\to\infty$.

\end{proof}

\section{Proof of Theorem~\ref{thm:no-bounded}}\label{app:no-bounded}
Appendix~\ref{app:hp} retained a uniform trajectory radius and isolated the dependence on confidence. Removing that radius allows a different obstruction: a horizon-dependent family with constant expected average error, even though its additive noise is independent and identically distributed on each chosen instance.

\begin{proof}
We use rare corrections to make a usually misleading gradient unbiased, while keeping the probability of no correction bounded below over the chosen horizon. Fix a sufficiently large integer $T\ge2$ and set
\begin{equation}\label{eq:variance-parameters}
 q=\frac1T,\qquad a=\frac{\sigma\sqrt q}{4},\qquad
 B_0=\sqrt{b_0^2+\sigma^2/16}.
\end{equation}
Take $T$ large enough that $a\le1$, and define
\begin{equation}\label{eq:variance-objective}
 f_T(x)=\begin{cases}
 0,&x\le0,\\
 x^2/2,&0<x<a,\\
 ax-a^2/2,&x\ge a.
 \end{cases}
\end{equation}
Its derivative is continuous, nondecreasing, and $1$-Lipschitz. Thus $f_T$ is convex and $1$-smooth, with $f_T^\star=0$. Choose $x^\star=0$ and initialize at $x_1=1$.

For this fixed instance, draw independent identically distributed noise at every time with law
\begin{equation}\label{eq:variance-noise}
 \xi_t=\begin{cases}
 -2a,&\text{with probability }1-q,\\
 2a(1-q)/q,&\text{with probability }q.
 \end{cases}
\end{equation}
The probabilities and values do not vary with time. Direct calculation gives
\[
 \mathbb E[\xi_t]=0,\qquad
 \mathbb E[\xi_t^2]=\frac{4a^2(1-q)}q\le\frac{\sigma^2}{4}.
\]
Therefore $g_t=f_T'(x_t)+\xi_t$ satisfies the conditional unbiasedness and variance assumptions for all $t\ge1$. The objective and noise law depend on the chosen horizon, but the variance bound is common to the entire family.

The noise law is unbiased only after averaging over its rare correction. To obtain a lower bound, we isolate the complementary event on which that correction never occurs. Let $\mathcal E_T$ be the event that the first $T$ noise values are all $-2a$. Then
$\mathbb P(\mathcal E_T)=(1-1/T)^T\ge1/4$.

On $\mathcal E_T$, induction shows that $x_t\ge1\ge a$ and $g_t=-a$ through time $T$. The updates thus remain in the linear region and move to the right. From \eqref{eq:variance-parameters}, for $t\le T$,
\begin{equation}\label{eq:variance-trajectory}
 b_t^2=b_0^2+ta^2\le B_0^2,\qquad
 x_t\ge1+\frac{\eta a}{B_0}(t-1),\qquad
 \bar x_T\ge1+\frac{\eta a(T-1)}{2B_0}.
\end{equation}
Because $a-a^2/2\ge0$, evaluating \eqref{eq:variance-objective} on this event gives
\begin{equation}\label{eq:variance-event-gap}
 f_T(\bar x_T)
 \ge\frac{\eta a^2(T-1)}{2B_0}
 =\frac{\eta\sigma^2(T-1)}{32B_0T}
 \ge\frac{\eta\sigma^2}{64B_0}.
\end{equation}
The objective is nonnegative on the complement of $\mathcal E_T$. Multiplying the last bound by $\mathbb P(\mathcal E_T)\ge1/4$ proves \eqref{eq:variance-worstcase}. This argument fixes an instance for each $T$; it does not produce nonconvergence of one instance as $T\to\infty$.

\end{proof}

\section{Proof of Theorem~\ref{thm:log-lb}}\label{app:log-lb}
The previous lower bound used rare large corrections under a variance constraint; we now ask what remains possible when the noise is uniformly bounded. A constant-magnitude scalar oracle exposes harmonic accumulation of fluctuations, yielding the logarithmic average-error lower bound for both algorithms.

\begin{proof}
Fix a sufficiently large horizon $T$. We choose a shallow linear region so that the restoring drift is smaller than the accumulated fluctuations, and then smooth its junction with the minimum region. Set $\gamma=\sigma/2$, choose a universal constant $c_0>0$ below, and define
\begin{equation}\label{eq:log-objective}
 a=c_0\gamma\sqrt{\frac{\log T}{T}},\qquad
 f_T(x)=\begin{cases}
 0,&x\le0,\\
 Lx^2/2,&0<x<a/L,\\
 ax-a^2/(2L),&x\ge a/L.
 \end{cases}
\end{equation}
The derivative is continuous, nondecreasing, and $L$-Lipschitz, so $f_T$ is convex and $L$-smooth. Its minimum is zero, and we initialize at $x_1=x^\star=0$.

For large enough $T$, $a\le\gamma/2$. Write $h_t=f_T'(x_t)\in[0,a]$ and, using fresh randomness at each query, let
\begin{equation}\label{eq:log-oracle}
 \mathbb P(g_t=\gamma\mid\mathcal F_{t-1})=\frac12(1+h_t/\gamma),\qquad
 \mathbb P(g_t=-\gamma\mid\mathcal F_{t-1})=\frac12(1-h_t/\gamma).
\end{equation}
These probabilities are valid and give $\mathbb E[g_t\mid\mathcal F_{t-1}]=h_t$. For $\xi_t=g_t-h_t$,
\begin{equation}\label{eq:log-noise}
 |\xi_t|\le\gamma+a\le\frac{3\sigma}{4},\qquad
 \mathbb E[\xi_t^2\mid\mathcal F_{t-1}]=\gamma^2-h_t^2\ge\frac34\gamma^2.
\end{equation}
The pathwise noise bound implies Assumption~\ref{ass:sub-Gaussian noise}. The oracle is defined for every time $t\ge1$, not just for the first $T$ queries. Moreover, $g_t^2=\gamma^2$ on every path, so both algorithms have the deterministic accumulator
\begin{equation}\label{eq:log-accumulator}
 b_t^2=b_0^2+t\gamma^2.
\end{equation}
For a fixed chosen $T$, the oracle depends on the current point through $h_t$, rather than on an externally specified time schedule.

With the oracle verified, the deterministic denominator allows us to expand the recursion with nonrandom weights. This is the step that isolates the source of the logarithmic factor. Define
\begin{equation}\label{eq:log-decomposition}
 \bar x_T=Y_T-\mathcal D_T,\qquad
 Y_T=-\sum_{s=1}^{T-1}\alpha_s\xi_s,\qquad
 \mathcal D_T=\sum_{s=1}^{T-1}\alpha_sh_s,\qquad
 \alpha_s=\frac{\eta(T-s)}{Tb_s}.
\end{equation}
Here $Y_T$ is a sum of martingale differences and has mean zero. The drift is random, but it admits the deterministic bound
\begin{equation}\label{eq:log-drift}
 0\le\mathcal D_T\le\frac{\eta a}{\gamma}\sum_{s=1}^{T-1}s^{-1/2}
 \le2c_0\eta\sqrt{\log T}=:d_T^{\max}.
\end{equation}
We use this pathwise upper bound when comparing positive parts; the random drift itself is never moved outside an expectation.

To quantify the fluctuations, put $W_T=\gamma^2\sum_{s=1}^{T-1}\alpha_s^2$. Orthogonality of martingale differences and \eqref{eq:log-noise} imply
\begin{equation}\label{eq:log-variance}
 \mathbb E[Y_T]=0,\qquad
 \frac34W_T\le\mathbb E[Y_T^2]\le W_T.
\end{equation}
Let $s_0=\max\{1,\lceil b_0^2/\gamma^2\rceil\}$. For $s_0\le s\le\lfloor T/2\rfloor$, we have $b_s^2\le2s\gamma^2$ and $(T-s)/T\ge1/2$. Consequently, for all sufficiently large $T$,
\begin{equation}\label{eq:log-harmonic}
 \frac{\eta^2}{16}\log T
 \le\frac{\eta^2}{8}\sum_{s=s_0}^{\lfloor T/2\rfloor}\frac1s
 \le W_T\le\eta^2\sum_{s=1}^{T-1}\frac1s
 \le\eta^2(1+\log T).
\end{equation}
The lower estimate holds once the finite initial segment up to $s_0$ is negligible relative to $\log T$.

The second-moment estimate must now be converted into a positive fluctuation. A variance lower bound alone would allow that fluctuation to be concentrated on very rare events. The bounded martingale increments give a fourth-moment bound that prevents this, and hence a lower bound on the expected positive part.

Set $X_s=-\alpha_s\xi_s$ and $d_s=3\gamma\alpha_s/2$. Then $\mathbb E[X_s\mid\mathcal F_{s-1}]=0$ and $|X_s|\le d_s$, with $d_s$ deterministic. Convexity of the exponential on $[-d_s,d_s]$ gives, for any $u\in\mathbb R$,
\[
 \mathbb E[e^{uX_s}\mid\mathcal F_{s-1}]
 \le\cosh(ud_s)\le e^{u^2d_s^2/2}.
\]
Iterated conditioning and Chernoff's inequality therefore yield
\[
 \mathbb P(|Y_T|\ge v)\le2e^{-v^2/(2V_T)},\qquad
 V_T=\sum_{s=1}^{T-1}d_s^2=\frac94W_T.
\]
Integrating the tail proves
\begin{equation}\label{eq:log-fourth}
 \mathbb E[Y_T^4]
 \le\int_0^\infty8v^3e^{-v^2/(2V_T)}\,dv
 =16V_T^2=81W_T^2.
\end{equation}
H\"older's inequality and \eqref{eq:log-variance} now give
\[
 \mathbb E[|Y_T|]
 \ge\frac{(\mathbb E[Y_T^2])^{3/2}}{(\mathbb E[Y_T^4])^{1/2}}
 \ge\frac{\sqrt3}{24}\sqrt{W_T}.
\]
Since $Y_T$ is integrable and mean zero, $\mathbb E[(Y_T)_+]=\mathbb E[|Y_T|]/2$; symmetry of its law is not needed. Combining this identity with \eqref{eq:log-harmonic}, we obtain
\begin{equation}\label{eq:log-positive}
 \mathbb E[(Y_T)_+]\ge k\eta\sqrt{\log T},\qquad k=\frac{\sqrt3}{192}.
\end{equation}

We can now convert this fluctuation into objective error. It remains to choose the slope so that neither the restoring drift nor the smoothing interval absorbs the positive fluctuation. Set $c_0=k/8$ and increase the threshold on $T$ so that $a/L\le(k/4)\eta\sqrt{\log T}$. Using $(y-z)_+\ge y_+-z$ for $z\ge0$, together with \eqref{eq:log-decomposition}, \eqref{eq:log-drift}, and \eqref{eq:log-positive}, gives
\begin{equation}\label{eq:log-smoothed-positive}
 \mathbb E\!\left[\left(\bar x_T-\frac aL\right)_+\right]
 \ge\mathbb E[(Y_T)_+]-d_T^{\max}-\frac aL
 \ge\frac k2\eta\sqrt{\log T}.
\end{equation}
For every $x\in\mathbb R$, \eqref{eq:log-objective} implies
$f_T(x)\ge a(x-a/L)_+$. On $x<a/L$ this follows from nonnegativity, and on $x\ge a/L$ it follows from the linear formula. Hence
\[
 \mathbb E[f_T(\bar x_T)-f_T^\star]
 \ge\frac{ak\eta}{2}\sqrt{\log T}
 =\frac{k^2}{32}\eta\sigma\frac{\log T}{\sqrt T}.
\]
This proves \eqref{eq:log-lower} with the universal constant $c=k^2/32$. Every lower threshold on $T$ depends only on $L,\sigma,b_0,\eta$. The same scalar function and oracle prove the statement for both algorithms.

\end{proof}

We now verify \eqref{eq:sgd-lower}, including the independent additive-noise version. This consequence is separate from the AdaGrad oracle construction: constant gradient magnitude is needed to make the adaptive denominator deterministic, but is unnecessary when SGD already uses deterministic stepsizes.
\begin{proof}[Proof of the SGD consequence]
Fix $L,\sigma,\lambda,\tau>0$, and put $\gamma=\sigma/2$, $\widetilde\eta=\lambda\gamma$, and $\widetilde b_0=\gamma\sqrt\tau$. Then
\[
 \frac{\widetilde\eta}{\sqrt{\widetilde b_0^2+t\gamma^2}}
 =\frac{\lambda}{\sqrt{t+\tau}}.
\]
With the constant-magnitude oracle \eqref{eq:log-oracle}, this identity makes the SGD and scalar AdaGrad trajectories exactly equal. Substituting $\eta=\widetilde\eta$ in \eqref{eq:log-lower} already proves the conditional-oracle version of \eqref{eq:sgd-lower}.

For the stronger independent-noise statement, use the same objective \eqref{eq:log-objective} with $a=c_0\gamma\sqrt{\log T/T}$, $c_0=k/8$, and $k=\sqrt3/192$, but draw independent Rademacher noise $\xi_t\in\{-\gamma,\gamma\}$. Let $g_t=f_T'(x_t)+\xi_t$. Conditional unbiasedness holds, and $\mathbb E[\exp(\xi_t^2/\sigma^2)\mid\mathcal F_{t-1}]=e^{1/4}\le e$. All these noise laws are fixed independently of the evaluation horizon.

Write $h_t=f_T'(x_t)\in[0,a]$ and
\[
 w_s=\frac{\lambda(T-s)}{T\sqrt{s+\tau}},\qquad
 Y_T=-\sum_{s<T}w_s\xi_s,\qquad
 \mathcal D_T=\sum_{s<T}w_sh_s,\qquad
 W_T=\gamma^2\sum_{s<T}w_s^2.
\]
The SGD recursion gives $\bar x_T=Y_T-\mathcal D_T$, and
\[
 0\le\mathcal D_T\le2c_0\widetilde\eta\sqrt{\log T},\qquad
 \mathbb E[Y_T]=0,\qquad \mathbb E[Y_T^2]=W_T.
\]
The harmonic estimates \eqref{eq:log-harmonic} apply with $\eta=\widetilde\eta$, $b_0=\widetilde b_0$, since $w_s$ is exactly the previous deterministic weight. Independence also gives the elementary fourth-moment identity
\[
 \mathbb E[Y_T^4]
 =\gamma^4\sum_{s<T}w_s^4
   +6\gamma^4\sum_{r<s<T}w_r^2w_s^2
 =3W_T^2-2\gamma^4\sum_{s<T}w_s^4\le3W_T^2.
\]
Thus the weaker estimates $\mathbb E[Y_T^2]\ge3W_T/4$ and $\mathbb E[Y_T^4]\le81W_T^2$ used in \eqref{eq:log-positive} remain valid. They yield
$\mathbb E[(Y_T)_+]\ge k\widetilde\eta\sqrt{\log T}$.
The same drift and smoothing comparison \eqref{eq:log-smoothed-positive} therefore gives
\[
 \mathbb E[f_T(\bar x_T)-f_T^\star]
 \ge\frac{k^2}{32}\widetilde\eta\sigma\frac{\log T}{\sqrt T}
 =\frac{k^2}{64}\lambda\sigma^2\frac{\log T}{\sqrt T}.
\]
This proves \eqref{eq:sgd-lower} with $c_{\mathrm{SGD}}=k^2/64$. Every sufficiently-large-$T$ threshold depends only on $L,\sigma,\lambda,\tau$. Only the hard objective changes with $T$; the stepsize schedule and the independent noise distribution do not.
\end{proof}

\paragraph{The finite-horizon comparison.}
For completeness, let SGD use a constant $0<\alpha\le1/(2L)$ and put $z_t=x_t-x^\star$. Conditional unbiasedness, convexity, and the smooth self-bound give
\[
 \mathbb E[\|z_{t+1}\|^2\mid\mathcal F_{t-1}]
 \le\|z_t\|^2-2\alpha(1-L\alpha)\Delta_t+\alpha^2\sigma^2
 \le\|z_t\|^2-\alpha\Delta_t+\alpha^2\sigma^2.
\]
All finite-time moments here exist: $\|\nabla f(x_t)\|\le L\|z_t\|$ and the variance bound imply, recursively, a finite second moment for $z_t$ from deterministic $x_1$. Taking expectations, summing over $t\le T$, dropping the nonnegative terminal squared distance, and using convexity of $f$ proves \eqref{eq:sgd-fixed-horizon}. This comparison is a direct distance-recursion calculation, not a claim that every SGD schedule avoids logarithmic losses.

\section{Proof of Theorem~\ref{thm:norm upper bound}}\label{app:positive}
The lower bounds above isolate failures that averaging, tail assumptions, and stability do not individually remove. We now combine expected averaging with bounded iterates and the conditional ABC condition. The proof follows the classical regret and self-bounding approach \citep{duchi2011adaptive,levy2018online,vaswani2020adaptive}, while retaining the initialization and noise coefficients. After establishing the shared estimates, we give a separate proof for each algorithm.

\subsection{Shared self-bounding and integrability estimates}
Both proofs close the same quadratic inequality, but use different pathwise regret bounds. We first record the common second-moment estimate and justify the expectations used later. Smoothness applied at $x-\nabla f(x)/L$ gives
\[
 f^\star\le f\!\left(x-\frac{\nabla f(x)}L\right)
 \le f(x)-\frac{\|\nabla f(x)\|^2}{2L}.
\]
Therefore
\begin{equation}\label{eq:self-bound}
 \|\nabla f(x)\|^2\le2L(f(x)-f^\star).
\end{equation}
Writing $S_T=\sum_{t=1}^T\mathbb E[\Delta_t]$ and using $Q=A+2LB$, the ABC condition implies
\begin{equation}\label{eq:ABC-sum}
 \sum_{t=1}^T\mathbb E[\|g_t\|^2]\le QS_T+CT.
\end{equation}
All terms are finite. Indeed, smoothness at $x^\star$, where $\nabla f(x^\star)=0$, bounds $\Delta_t$ by $LD^2/2$ for AdaGrad-Norm and by $(L/2)\sum_iD_i^2$ for AdaGrad. The ABC condition then gives uniform conditional second-moment bounds. In particular, the accumulators have finite first moments at each finite time, and the inner products with bounded iterates are integrable. These observations justify both the martingale cancellation and the applications of Jensen's inequality below.

\subsection{AdaGrad-Norm}
Scalar normalization allows a single Euclidean distance identity. The important order of operations is to sum this identity pathwise before taking expectations, so the correlation between $g_t$ and $b_t$ never needs to be discarded.

\begin{proof}[Proof of Theorem~\ref{thm:norm upper bound}, part~1]
Write $z_t=x_t-x^\star$. The actual current-gradient update gives
\[
 \langle g_t,z_t\rangle
 =\frac{b_t}{2\eta}(\|z_t\|^2-\|z_{t+1}\|^2)
 +\frac{\eta\|g_t\|^2}{2b_t}.
\]
Summation by parts and $\|z_t\|\le D$ yield
\begin{align*}
 \sum_{t=1}^Tb_t(\|z_t\|^2-\|z_{t+1}\|^2)
 &=b_1\|z_1\|^2+\sum_{t=2}^T(b_t-b_{t-1})\|z_t\|^2-b_T\|z_{T+1}\|^2\\
 &\le D^2b_T.
\end{align*}
Also,
\[
 \frac{\|g_t\|^2}{b_t}
 =\frac{(b_t-b_{t-1})(b_t+b_{t-1})}{b_t}
 \le2(b_t-b_{t-1}).
\]
Consequently,
\begin{equation}\label{eq:norm-pathwise}
 \sum_{t=1}^T\langle g_t,z_t\rangle
 \le\left(\frac{D^2}{2\eta}+\eta\right)b_T-\eta b_0
 \le\Gamma b_T.
\end{equation}
Because $z_t$ is bounded and $\mathcal F_{t-1}$-measurable, conditional unbiasedness gives $\mathbb E[\langle\xi_t,z_t\rangle]=0$. Convexity and \eqref{eq:norm-pathwise} thus imply
\[
 S_T\le\sum_{t=1}^T\mathbb E[\langle\nabla f(x_t),z_t\rangle]
 =\sum_{t=1}^T\mathbb E[\langle g_t,z_t\rangle]
 \le\Gamma\mathbb E[b_T].
\]
Applying Jensen's inequality only now, and then \eqref{eq:ABC-sum}, gives
\begin{equation}\label{eq:norm-self-consistency}
 S_T\le\Gamma\sqrt{\mathbb E[b_T^2]}
 \le\Gamma\sqrt{b_0^2+QS_T+CT}.
\end{equation}
Since $S_T\ge0$, squaring and solving the quadratic inequality yields
\[
 S_T\le\frac{\Gamma^2Q+\sqrt{\Gamma^4Q^2+4\Gamma^2(b_0^2+CT)}}2.
\]
Convexity gives $\mathbb E[f(\bar x_T)-f^\star]\le S_T/T$, hence the sharper estimate
\begin{equation}\label{eq:norm-adaptive-root}
 \mathbb E[f(\bar x_T)-f^\star]
 \le\frac{\Gamma^2Q+\sqrt{\Gamma^4Q^2+4\Gamma^2(b_0^2+CT)}}{2T}.
\end{equation}
Finally,
\[
 \sqrt{\Gamma^4Q^2+4\Gamma^2(b_0^2+CT)}
 \le\Gamma^2Q+2\Gamma\sqrt{b_0^2+CT}
 \le\Gamma^2Q+2\Gamma b_0+2\Gamma\sqrt{CT},
\]
which proves \eqref{eq:norm-noise-adaptive} with the stated constants.
\end{proof}

\subsection{Coordinate-wise AdaGrad}
Each coordinate has its own accumulator, so one first bounds its scalar regret separately. Summing these bounds and applying Cauchy--Schwarz recovers a scalar quadratic inequality without requiring coordinate-wise ABC assumptions.

\begin{proof}[Proof of Theorem~\ref{thm:norm upper bound}, part~2]
Let $z_{t,i}=x_{t,i}-x_i^\star$. The update in coordinate $i$ gives
\[
 g_{t,i}z_{t,i}
 =\frac{b_{t,i}}{2\eta}(z_{t,i}^2-z_{t+1,i}^2)
 +\frac{\eta g_{t,i}^2}{2b_{t,i}}.
\]
Since $|z_{t,i}|\le D_i$ and $b_{t,i}$ is nondecreasing,
\[
 \sum_{t=1}^Tb_{t,i}(z_{t,i}^2-z_{t+1,i}^2)\le D_i^2b_{T,i},
 \qquad
 \sum_{t=1}^T\frac{g_{t,i}^2}{b_{t,i}}\le2(b_{T,i}-b_{0,i}).
\]
It follows pathwise that
\begin{equation}\label{eq:coordinate-pathwise}
 \sum_{t=1}^Tg_{t,i}z_{t,i}
 \le\left(\frac{D_i^2}{2\eta}+\eta\right)b_{T,i}-\eta b_{0,i}
 \le\Gamma_i b_{T,i}.
\end{equation}
Summing over $i$, taking expectations, and using convexity and conditional unbiasedness after this pathwise estimate gives
\begin{equation}\label{eq:coordinate-sum}
 S_T\le\sum_{i=1}^d\Gamma_i\mathbb E[b_{T,i}]
 \le\sum_{i=1}^d\Gamma_i
 \sqrt{b_{0,i}^2+\sum_{t=1}^T\mathbb E[g_{t,i}^2]}.
\end{equation}
The martingale terms cancel because $z_{t,i}$ is bounded and $\mathcal F_{t-1}$-measurable. Cauchy--Schwarz and the global estimate \eqref{eq:ABC-sum} now yield
\begin{equation}\label{eq:coordinate-self-consistency}
 \begin{aligned}
 S_T
 &\le\left(\sum_{i=1}^d\Gamma_i^2\right)^{1/2}
 \left(\sum_{i=1}^db_{0,i}^2+\sum_{t=1}^T\mathbb E[\|g_t\|^2]\right)^{1/2}\\
 &\le\Lambda\sqrt{\beta_0^2+QS_T+CT}.
 \end{aligned}
\end{equation}
Squaring and taking the positive root gives
\[
 S_T\le\frac{\Lambda^2Q+\sqrt{\Lambda^4Q^2+4\Lambda^2(\beta_0^2+CT)}}2.
\]
Dividing by $T$ and using convexity of $f$ gives the sharper estimate
\begin{equation}\label{eq:coord-adaptive-root}
 \mathbb E[f(\bar x_T)-f^\star]
 \le\frac{\Lambda^2Q+\sqrt{\Lambda^4Q^2+4\Lambda^2(\beta_0^2+CT)}}{2T}.
\end{equation}
The estimate
\[
 \sqrt{\Lambda^4Q^2+4\Lambda^2(\beta_0^2+CT)}
 \le\Lambda^2Q+2\Lambda\beta_0+2\Lambda\sqrt{CT}
\]
then gives \eqref{eq:coordinate-noise-adaptive}. This argument uses only the global ABC condition. The intermediate bound \eqref{eq:coordinate-sum} preserves more detailed coordinate-wise second-moment information when it is available.
\end{proof}

\subsection{Noise specializations and the role of stability}
The two proofs have the same final dependence on the residual coefficient $C$, even though their geometric constants differ. When $C=0$, their simplified bounds are $(\Gamma^2Q+\Gamma b_0)/T$ and $(\Lambda^2Q+\Lambda\beta_0)/T$. This case allows nonzero state-dependent noise; it excludes only an additive residual term in the ABC model.

Under conditional unbiasedness and bounded variance,
\[
 \mathbb E[\|g_t\|^2\mid\mathcal F_{t-1}]
 =\|\nabla f(x_t)\|^2+\mathbb E[\|\xi_t\|^2\mid\mathcal F_{t-1}]
 \le\|\nabla f(x_t)\|^2+\sigma^2.
\]
Thus $(A,B,C)=(0,1,\sigma^2)$, so $Q=2L$ and
\[
 \mathbb E[f(\bar x_T)-f^\star]
 \le\frac{2L\Gamma^2+\Gamma b_0}{T}+\frac{\Gamma\sigma}{\sqrt T}
\]
for AdaGrad-Norm, with the analogous substitution $(\Gamma,b_0)\mapsto(\Lambda,\beta_0)$ for AdaGrad. These guarantees assume a common almost-sure stability radius. They therefore do not contradict the horizon-dependent lower bounds without such a radius, nor do they imply a last-iterate rate or polynomial high-probability bounds with polylogarithmic confidence dependence. Markov's inequality does give the weaker $1/\delta$ dependence in \eqref{eq:markov-average}.

\endgroup
\end{document}